\newcommand{\CO}[1][]{\mathcal{C}_{\cO#1}}
\newcommand{\codim}{\operatorname{codim}}
\newcommand{\grad}{\operatorname{grad}}
\newcommand{\Cont}{\mathrm{Cont}}
\def\11{{\mathbf 1}}
\def\AA{{\mathbb A}}
\def\CC{{\mathbb C}}
\def\FF{{\mathbb F}}
\def\NN{{\mathbb N}}
\def\QQ{{\mathbb Q}}
\def\ZZ{{\mathbb Z}}
\def\cM{{\mathcal M}}
\def\cO{{\mathcal O}}
\def\llp{\mathopen{(\!(}}
\def\rrp{\mathopen{)\!)}}
\newtheorem{thm}[subsection]{Theorem}
\newtheorem{lem}[subsection]{Lemma}
\newtheorem{prop}[subsection]{Proposition}
\newtheorem{conj}[subsection]{Conjecture}
\theoremstyle{definition}
\newtheorem{defn}[subsection]{Definition}
\newtheorem{def-prop}[subsection]{Proposition-Definition}
\newtheorem{def-thm}[subsection]{Theorem-Definition}
\newtheorem{def-lem}[subsection]{Lemma-Definition}
\theoremstyle{remark}
\newtheorem{remark}[subsection]{Remark}
\theoremstyle{plain}
\numberwithin{equation}{subsection}
\newcommand{\ord}{\operatorname{ord}}
\title[Exponential sums and log-canonical thresholds]{Bounds for
$p$-adic exponential sums and log-canonical thresholds} 
\author{Raf Cluckers}
\address{Universit\'e Lille 1, 
Laboratoire Painlev\'e,
 CNRS - UMR 8524, Cit\'e Scientifique, 59655
Villeneuve d'Ascq Cedex, France, and,
KU Leuven, Department of Mathematics,
Celestijnenlaan 200B, B-3001 Leu\-ven, Bel\-gium\\}
\email{Raf.Cluckers@math.univ-lille1.fr}
\urladdr{http://math.univ-lille1.fr/$\sim$cluckers}
\author{Willem Veys}
\address{KU Leuven, Department of Mathematics,
Celestijnenlaan 200B, B-3001 Leu\-ven, Bel\-gium}
\email{wim.veys@wis.kuleuven.be}
\urladdr{http://wis.kuleuven.be/algebra/veys.htm}
\begin{abstract}
We propose a conjecture for exponential sums which generalizes both a conjecture by Igusa and a local variant by Denef and Sperber, in particular, it is without the homogeneity condition on the polynomial in the phase, and with new predicted uniform behavior. The exponential sums have summation sets consisting of integers modulo $p^m$ lying $p$-adically close to $y$, and the proposed bounds are uniform in
$p$, $y$, and $m$. We give evidence for the conjecture, by showing uniform bounds in $p$, $y$, and in some values for $m$. On the way, we prove new bounds for log-canonical thresholds which are closely related to the bounds predicted by the conjecture.
\end{abstract}
\subjclass
{Primary 11L07; 
  Secondary 11L05. 
}
\keywords{$p$-adic exponential sums, Igusa's
conjecture on exponential sums, Denef-Sperber conjecture on exponential sums, log-canonical threshold, motivic oscillation index, complex oscillation index}
\begin{document}

\maketitle

\section{Introduction and main results}\label{intro}

We introduce a generalization of a conjecture by Igusa \cite[page 2]{Igusa3} (and of a variant by Denef and Sperber \cite[page 2]{DenSper}), which Igusa related to integrability properties over the ad\`eles and to an ad\`elic Poisson summation formula in \cite[Chapter 4]{Igusa3}. We give evidence for this conjecture, which is also new evidence for the original conjectures of \cite{Igusa3} and \cite{DenSper}. The conjecture is about upper bounds for exponential sums of the form
$$
\sum_{x\in \{1,\ldots,N\}^n} \exp ( 2\pi i \frac{F(x)}{N} )
$$
for general polynomials $F$ over $\ZZ$ in $n$ variables, expressed in terms of $N$ and holding for all squarefull integers $N$. It is most conveniently expressed when $N$ is a power of a prime number, the power being at least $2$, and can be studied via a local variant, see the sums $S$ and $S_y$ below and Conjecture \ref{new}. A variant over number fields is given in Section \ref{sec:K}.

Let us fix a nonconstant polynomial $F$ in $n$ variables over $\ZZ$.
Consider, for any integer $m>1 $ and any prime number $p$, the
exponential sum
$$
S(F,p,m) :=  p^{-mn}\cdot\sum_{x\in (\ZZ/p^m\ZZ)^n} \exp ( 2\pi i \frac{F(x)}{p^m} ),
$$
and, for any $y\in\ZZ^n$, its local version
$$
S_y(F,p,m) :=  p^{-mn}\cdot\sum_{x\in y + (p\ZZ/p^m\ZZ)^n} \exp ( 2\pi i \frac{F(x)}{p^m} ),
$$
where
$$
y + (p\ZZ/p^m\ZZ)^n = \{x\in (\ZZ/p^m\ZZ)^n\mid x_i\equiv y_i\bmod (p)\mbox{ for each } i\}.
$$


Our conjectured bounds for
the above sums in terms of $p$, $m,$
and $y$ (and our evidence for these bounds) will involve log-canonical
thresholds, but a stronger formulation in terms of the motivic
oscillation index of \cite{Cigumodp} or the complex oscillation index of \cite[13.1.5]{Arnold.G.V.II} would also make sense and would in fact sometimes be sharper. For any
field $k$ of characteristic zero, a polynomial $f\in k[x]=
k[x_1,\dots,x_n]$ and a point $y\in k^n$ satisfying $f(y)=0$, we
write $c_y(f)$ to denote the log-canonical threshold of $f$ at $y$
(see Definition \ref{c(f)} below), and $c(f)$ for the log canonical
threshold of $f$, being the minimum of the $c_y(f)$ when $y$ runs
over all points in $\overline k^n$ satisfying $f(y)=0$, where
$\overline k$ is an algebraic closure of $k$.
 Let us fix some more notation.

\begin{defn}\label{log}
Let $a(F)$ be the minimum, over all $b\in \CC$, of the log-canonical thresholds of the polynomials $F(x)-b$.
Further, for $y\in\ZZ^n$, let $a_{y,p}(F)$ be the minimum of the
log-canonical thresholds at $y'$ of the polynomials $F(x)-F(y')$,
where the minimum is taken over all $y'\in y+(p\ZZ_p)^n$.  Note that $a(F)\leq a_{y,p}(F)$ for each $p$ and $y$. 

\end{defn}

Now we can state our generalization of the conjectures by Igusa and by Denef and Sperber.

\begin{conj}\label{new}
There exists a function $L_F:\NN\to \NN$ with $L_F(m) \ll m^{n-1}$ such that for all primes $p$, all $m\geq 2$, and all $y \in \ZZ^n$, one has
\begin{equation}\label{1}
| S(F,p,m) |_\CC \leq L_F(m) p^{-m a(F)}
\end{equation}
and
\begin{equation}\label{1y}
| S_y(F,p,m) |_\CC \leq L_F(m) p^{-m a_{y,p}(F)},
\end{equation}
where $|\cdot|_\CC$ is the complex modulus.
\end{conj}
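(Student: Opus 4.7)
The plan is to follow the Igusa--Denef framework of resolution of singularities applied to $p$-adic oscillatory integrals. First, rewrite the sums as integrals:
\[
S_y(F,p,m) = \int_{y + p\ZZ_p^n} \psi\bigl(p^{-m} F(x)\bigr)\,|dx|,
\]
with $\psi$ the standard additive character on $\QQ_p$ trivial on $\ZZ_p$, and analogously $S(F,p,m) = \int_{\ZZ_p^n} \psi(p^{-m}F(x))\,|dx|$.

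Second, for each $y' \in y + p\ZZ_p^n$ set $b = F(y')$ and fix an embedded resolution $h : Y \to \AA^n_\QQ$ of $F - b$ with numerical data $(N_i, \nu_i)_i$, so that in suitable local coordinates $h^{*}(F-b)$ is a unit times $\prod u_i^{N_i}$ and $h^{*}dx$ is a unit times $\prod u_i^{\nu_i - 1}\,du$. A standard monomial-integral computation bounds the local contribution in each chart meeting $h^{-1}(y')$ by $C_F\, m^{k-1}\, p^{-m \min_i \nu_i/N_i}$, where $k \leq n$ counts the exceptional components through the chart and $C_F$ depends only on the resolution. Since $\min_i \nu_i/N_i$ over components through $y'$ equals $c_{y'}(F-F(y'))$, summing over charts and minimizing over $y'$ produces (\ref{1y}) with a polynomial factor in $m$ of degree $\leq n-1$. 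The global estimate (\ref{1}) follows either by the same scheme with $y'$ ranging over all of $\ZZ_p^n$ and $b$ over $\ZZ$, or by summing (\ref{1y}) over coset representatives.

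The hard part is uniformity in $p$. The chosen resolution $h$ has good reduction modulo $p$ only for $p \gg 0$, where the constant $C_F$ in the monomial estimate is genuinely $p$-independent; the finitely many small primes can be absorbed into $L_F$ by crude bounds $|S_y(F,p,m)|_{\CC} \leq 1$ combined with $a_{y,p}(F) \leq n$. More delicate is capturing the exponent $a_{y,p}(F)$ rather than the weaker $a(F)$: this requires independent input on the behaviour of log-canonical thresholds as $y'$ varies $p$-adically near $y$, which is the new algebro-geometric content announced in the abstract and must be proved first as a standalone result. I would therefore first establish those uniform bounds on log-canonical thresholds, then feed them into the $p$-adic calculation above.

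Finally, I do not expect this scheme to yield the full conjecture for every $m \geq 2$, since controlling the polynomial-in-$m$ factor uniformly in $p$ across all residues of $m$ seems beyond current techniques; instead, I would aim to prove (\ref{1y}) in natural regimes (for instance $m$ large, or $m$ divisible by a fixed multiplier depending on the numerical data of a resolution) as evidence for the full conjecture. Combining such range-restricted estimates with the new log-canonical threshold bounds already gives a coherent package of partial results consistent with the shape of Conjecture \ref{new}.
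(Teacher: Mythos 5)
The statement you are trying to establish is labelled \emph{Conjecture} \ref{new} in the paper, and it remains a conjecture there: the paper proves only the partial evidence in Theorem \ref{rplus1}, namely the bounds for $2 \leq m \leq r+2$ (resp.\ $2 \leq m \leq r_{y,p}+2$), with a constant $L_F$. So the right benchmark for your proposal is whether it could deliver a result of comparable strength, not the full conjecture.

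The core difficulty is exactly the place where your sketch waves its hands. You assert that once the resolution $h$ has good reduction modulo $p$, ``the constant $C_F$ in the monomial estimate is genuinely $p$-independent.'' This is not true, and the failure of this step is precisely why Igusa's conjecture has been open for decades. After resolving and passing to charts, the monomial integral in each chart is a sum over $\FF_p$-rational points of intersections of exceptional divisors, weighted by $p^{-m\nu_i/N_i}$ contributions. The number of $\FF_p$-points on a stratum of dimension $d$ is of order $p^d$; this power of $p$ is absorbed into the exponent only when the arithmetic complexity of the resolution cooperates (e.g.\ in toric situations for non-degenerate $F$, which is why the Denef--Sperber and Cluckers results exist). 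In general the stratum counts introduce extra powers of $p$ that destroy the target exponent $p^{-ma_{y,p}(F)}$ uniformly in $p$. So ``absorb small primes by crude bounds and use good reduction for large $p$'' is not a fix; the obstacle is at large $p$. This is why (\ref{fix1}) and (\ref{fixp}) are stated with $p$-dependent constants $L_{F,p}$ in the paper, not uniform ones, and that $p$-dependence is irreducible by the resolution method alone.

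Your closing paragraph also aims in the wrong direction. You suggest restricting to ``$m$ large'' or ``$m$ divisible by a fixed multiplier,'' but the paper's accessible regime is the opposite: small $m$, specifically $2 \leq m \leq r+2$. The reason is structural: for $m$ just above the multiplicity $r$, one can Taylor-expand $F$ around each critical point $y'$, peel off the dominant homogeneous part $f_{y,r}$, and reduce $S_y(F,p,m)$ to a single finite-field exponential sum $\sum_{v\in\FF_p^n}\psi_p(\overline{f_{y,r}}(v))$ (this is Lemma \ref{smooth} plus the computation in the proof of Theorem \ref{rplus1}). Katz's bounds (Lemma \ref{lemKa}) then give a uniform power saving $p^{-\delta}$ with $\delta\in\{1/2,1\}$, and the new log-canonical-threshold inequalities (Proposition \ref{geomcfr} and Lemma \ref{geomc}) are designed exactly to convert $n+\delta$ into $(r+1)a_{y,p}(F)$. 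None of this machinery appears in your sketch: you invoke ``uniform bounds on log-canonical thresholds'' as a black box to be proved first, but the actual role they play is to translate the output of Katz's estimate into the conjectured exponent, not to make a resolution-based argument uniform. So the genuinely new content of the paper (Proposition \ref{geomcfr}, Lemma \ref{geomc}, Proposition \ref{geom2}, Theorem \ref{geomcfe}) serves a different pipeline than the one you set up. To salvage your proposal you would need to replace the resolution-integral core with the Taylor-expansion-to-finite-fields reduction, bring in a Weil/Katz-type input for the resulting character sums, and then identify what inequality on $c_0(f)$ in terms of the initial forms $f_r, f_{r+1},\ldots$ makes the exponents match; that inequality is exactly Proposition \ref{geomcfr} and its refinements.
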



Under some extra conditions that were introduced by Igusa for
reasons of his application to ad\`elic integrability but that we believe are irrelevant for
bounding the above sums, he conjectured in the introduction of
\cite{Igusa3} that (\ref{1}) holds for all homogeneous $F$ and all
$m\geq 1$. We believe that focusing on $m$ at least $2$ allows one
to remove the homogeneity condition, and we give evidence below. The
bounds (\ref{1}) (with the log-canonical threshold, resp.~the
variant with the motivic oscillation index of \cite{Cigumodp} in the exponent) imply
Igusa's original conjecture (with the log-canonical threshold,
resp.~his proposed candidate oscillation indices in the
exponent), including the case $m=1$, by \cite{Cigumodp}. Indeed, the
case $m=1$ of Igusa's conjecture (for homogeneous $F$) is known by \cite{Cigumodp} for
any of these exponents. The estimates (\ref{1}) of the conjecture
yield a criterion to show ad\`elic $L^q$-integrability for an ad\`elic
function related to $S(F,p,m)$, with a simple lower bound on $q$
based on the exponent $a(F)$, as noted by Igusa in \cite[Chapter 4]{Igusa3}.
Denef and Sperber \cite{DenSper} conjectured the local variant
(\ref{1y}) for $y=0$, thus without uniformity in $y$. Both
inequalities, namely the global (\ref{1}) and the local but uniform
(\ref{1y}), seem closely related.

We prove Conjecture \ref{new} for $m$ up to the value $4$ in general, and, more specifically, for $m$ up to some value related to orders of vanishing, defined as follows. 

\begin{defn}\label{r}
Let $r$ be the minimum of the order of vanishing of the functions
$x\mapsto F(x)- b $ at the singular points in $\CC^n$ of $F=b$,
i.e., the minimum of the multiplicities of the singular points of
the hypersurfaces $F=b$, where $b$ runs over $\CC$.
Here we consider the minimum over the empty set to be $+\infty$.
Further, for $y\in \ZZ^n$, let $r_{y,p}$ be the minimum of the order
of vanishing of the functions $x\mapsto F(x)-F(y')$ at $y'$, where
$y'$ runs only over singular points in the $p$-adic neighbourhood
$y+(p\ZZ_p)^n$ for which moreover $c_{y'}(F-F(y')) = a_{y,p}(F)$.

\end{defn}

Note that by definition $r_{y,p}\geq r\geq 2$ and $1\geq a_{y,p}(F)\geq a(F)\geq 0$.
With notation as introduced above and with $+\infty+a=+\infty$ for any real $a$, we can now state our main result as evidence for Conjecture \ref{new}.

\begin{thm}\label{rplus1}
There exists a constant $L_F$ such that, for all prime numbers $p$, all $y\in \ZZ^n$, and all $m$ with $2\leq m\leq r+2$, resp.~with $2\leq m\leq r_{y,p}+2$, one has
\begin{equation}\label{b}
| S(F,p,m) |_\CC \leq L_F p^{- m a(F)},
\end{equation}
resp.
\begin{equation}\label{by}
| S_y(F,p,m) |_\CC \leq L_F p^{- m a_{y,p}(F)}.
\end{equation}
\end{thm}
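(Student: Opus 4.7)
I would prove the local inequality (\ref{by}) first and deduce (\ref{b}) by summing over residue classes modulo $p$. Writing $\psi$ for the standard additive character of $\QQ_p$ and substituting $x=y+p\xi$ with $\xi\in\ZZ_p^n$,
$$
S_y(F,p,m) = p^{-n}\int_{\ZZ_p^n}\psi\bigl(p^{-m}F(y+p\xi)\bigr)\,d\xi.
$$
If the ball $y+(p\ZZ_p)^n$ contains no critical point of $F-F(y')$, then $a_{y,p}(F)=1$; one can stratify the ball by the $p$-adic valuation of $\nabla F$ and, on each stratum, apply a Hensel-type change of variables straightening $F$ to a linear function in one coordinate. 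Summing the resulting linear character provides the requisite cancellation once $m\geq 2$, giving $|S_y|\leq L_F p^{-m}$.

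In the complementary case, fix a critical point $y'\in y+(p\ZZ_p)^n$ realising both $c_{y'}(F-F(y'))=a_{y,p}(F)$ and $\ord_{y'}(F-F(y'))=r_{y,p}$. Recentering at $y'$ (which only multiplies $S_y$ by a complex phase of modulus one) and using the Taylor expansion
$$
F(y'+p\xi) = F(y') + \sum_{k\geq r_{y,p}} p^k G_k(\xi)
$$
with $G_k$ homogeneous of degree $k$, the phase $p^{-m}F(y'+p\xi)\bmod\ZZ_p$ depends on $\xi$ only through the terms with $r_{y,p}\leq k\leq m-1$. Because $m\leq r_{y,p}+2$, at most two such terms survive and $\xi$ enters only through its residue modulo $p^{\max(0,m-r_{y,p})}\in\{1,p,p^2\}$, so $S_y$ collapses to a finite character sum attached to the leading jet $G_{r_{y,p}}$, possibly twisted by $G_{r_{y,p}+1}$ when $m=r_{y,p}+2$.

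For $m\leq r_{y,p}$ the phase is constant in $\xi$, so $|S_y|=p^{-n}$; combined with the standard estimate $a_{y,p}(F)\leq n/r_{y,p}$ at a critical point of multiplicity $r_{y,p}$ this already gives (\ref{by}). For $m=r_{y,p}+1$ and $m=r_{y,p}+2$ the reduced sum is a mod-$p$ (respectively mod-$p^2$) exponential sum of a low-order jet of $F-F(y')$ at $y'$; I would bound it by combining the Weil--Deligne-type estimate of Cluckers~\cite{Cigumodp} (which relates such character sums to the log-canonical threshold at the origin of the relevant jet) with the new log-canonical threshold bounds established in the preceding section, which deliver exactly the $p$-power savings needed to convert $c_0$ of the jet into an exponent compatible with $m\cdot a_{y,p}(F)$. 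This yields (\ref{by}) with a constant $L_F$ depending only on $F$.

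The global bound (\ref{b}) then follows by summing over residues $y\in(\ZZ/p\ZZ)^n$: the surplus $p^{-m(a_{y,p}(F)-a(F))}$ absorbs the factor $p^n$ for residues with $a_{y,p}(F)>a(F)$, while the count of residues achieving $a_{y,p}(F)=a(F)$ is uniform in $p$ thanks to the finiteness in $\CC^n$ of the locus of critical points realising $a(F)$. The main obstacle will be the very last step: proving, uniformly in $p$ and with constants depending only on $F$, the precise comparison between the log-canonical threshold of the low-order jet of $F-F(y')$ at $y'$ and the combination of $r_{y,p}$, $a_{y,p}(F)$ and $n$ that exactly cancels the overhead coming from the reduction to a finite sum. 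This is exactly the log-canonical-threshold content promised in the abstract.
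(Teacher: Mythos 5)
Your high-level framework matches the paper's: center at a critical point $y'$ realizing both $a_{y,p}(F)$ and $r_{y,p}$, note that for $m\leq r_{y,p}$ the phase degenerates to a counting problem, reduce the remaining cases to finite-field character sums for the low-order Taylor jet, apply Weil-type bounds together with new log-canonical threshold inequalities, and then sum over residues mod $p$ for the global estimate. That said, there are genuine gaps beyond the one you flag yourself.

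First, your treatment of $m=r_{y,p}+2$ would not go through as described. You reduce the sum to a ``mod-$p^2$ exponential sum attached to $G_{r_{y,p}}$ possibly twisted by $G_{r_{y,p}+1}$'', but there is no square-root cancellation statement available for a general mod-$p^2$ sum that is uniform in $p$ and gives the right exponent. The paper's actual mechanism is a two-step reduction: split the domain of the integral according to whether $\xi\bmod p$ lies in $X(\FF_p)$, where $X=\{\grad f_{y,r_{y,p}}=0\}$ is the singular locus of the leading form. Off $X$ the integral vanishes by Hensel's lemma plus orthogonality. On $X$, the leading-form contribution $p^{-2}f_{y,r_{y,p}}(\xi)$ lands in $\ZZ_p$ (using Ax--Kochen to pass from mod-$p$ to $p$-adic vanishing), leaving a genuine mod-$p$ sum of $f_{y,r_{y,p}+1}$ over $X(\FF_p)$. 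Only then can Katz's theorems (not a result of \cite{Cigumodp}, which you cite) be applied, with the power saving $q^{\dim X - 1/2}$ or the trivial count $\#X(\FF_p)$ depending on whether $f_{y,r_{y,p}+1}$ vanishes on a top-dimensional component of $X$. Without this intermediate passage from a mod-$p^2$ to a mod-$p$ sum supported on the singular variety of the leading form, the argument stalls.

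Second, the uniformity in $p$ is not free. Your stratification-by-valuation argument in the non-critical case is unnecessary machinery; what is actually needed, and what you do not supply, is the Ax--Kochen-type statement (Lemma \ref{AK2} in the paper) that for $p$ large a mod-$p$ critical point of multiplicity $r$ lifts to a $p$-adic point where $F$ vanishes to order $\geq r$. This is what makes the collapse of the phase legitimate for all but finitely many $p$; the finitely many small $p$ are then absorbed into the constant via the known $p$-dependent bounds (\ref{fix1}), (\ref{fixp}). Third, for the global estimate, summing over residues is not merely a matter of ``the surplus absorbing $p^n$'': the paper stratifies the critical scheme $V$ into constructible pieces $V_0$, $V_{1/2}$, $V_1$ according to the order of vanishing and reducedness of the leading jet, computes the dimension of each piece, and invokes the \emph{global} log-canonical threshold inequality Proposition \ref{geom2} (itself requiring results of Dimca--Musta\c t\u a--Saito--Takagi) together with Noether normalization to count points. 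You have the right slogan but none of the bookkeeping that makes it work.

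Finally, the log-canonical threshold inequalities that you correctly identify as the crux --- Proposition \ref{geomcfr}, Lemma \ref{geomc}, and Proposition \ref{geom2} --- are not supplied, and they are genuinely new; so while you flag this as ``the main obstacle'', it is worth emphasizing that without them the $m=r_{y,p}+1$ and $m=r_{y,p}+2$ cases cannot be closed.
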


Theorem \ref{rplus1} is proved using new inequalities for log-canonical thresholds and by reducing to finite field exponential sums for which bounds by Katz \cite{Katz} can be used, see Lemma \ref{lemKa}.
In Section \ref{sec:K}, we explain analogues over finite field extensions of $\QQ_p$ and $\FF_p\llp t \rrp$, for large primes $p$.

\subsection*{}
Let us now explain the bounds on log-canonical thresholds related to the conjecture.
Let $f$ be a nonconstant polynomial over $\CC$ in the variables $x=(x_1,\ldots,x_n)$, and
write
\begin{equation}\label{ffr}
f = \sum_{i\geq r} f_i,
\end{equation}
with $f_i$ either identically zero or homogeneous and of degree $i$, and where $f_r$ is nonzero for some $r\geq 2$.
As before, write $c_0(f)$ for the log-canonical threshold of $f$ at zero.
If $f$ is non-reduced at zero (that is, $g^2$ divides $f$ for some polynomial $g$ which vanishes at $0$), then one knows that
\begin{equation}\label{trivial0}
c_0(f)\leq \frac{1}{2}.
\end{equation}
In any case one has (see Section 8 of \cite{Kollar})
\begin{equation}\label{trivial}
c_0(f) \leq \frac{n}{r}.
\end{equation}
 The following inequalities can be considered as a certain combination of the above two (quite obvious) inequalities, but with the non-reducedness assumption on $f_r$ instead of on $f$.
\begin{lem}\label{geomc}
Suppose that $g^2$ divides $f_r$ for some nonconstant polynomial $g$.
Then one has the inequality
\begin{equation}\label{c_0-1}
(r+1) c_0(f) \leq n + \frac{1}{2}.
\end{equation}
If moreover $g$ divides $f_{r+1}$ (this includes the case $f_{r+1}$ identically zero), then
\begin{equation}\label{c_0-2}
(r+2) c_0(f) \leq n + 1.
\end{equation}
\end{lem}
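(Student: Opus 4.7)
The plan is to exhibit a divisorial valuation $v$ of $\CC(x_1, \dots, x_n)$ centered at the origin for which $\mathrm{ld}(v)/v(f)$ realizes the claimed bound, and to conclude via the standard inequality $c_0(f) \leq \mathrm{ld}(v)/v(f)$. This $v$ will arise as the exceptional divisor of a two-step birational modification of $\CC^n$. A preliminary reduction, by passing to the top homogeneous component of $g$ and then to an irreducible factor thereof, lets us assume that $g$ is an irreducible homogeneous polynomial: both hypotheses $g^2 \mid f_r$ and, if needed, $g \mid f_{r+1}$ are preserved because $f_r$ and $f_{r+1}$ are themselves homogeneous.

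Let $\pi_1 : Y_1 \to X = \CC^n$ be the blow-up of the origin, with exceptional divisor $E_1 \cong \PP^{n-1}$, so $\pi_1^* f = r E_1 + F'$ (with $F'$ the strict transform of $V(f)$) and $K_{Y_1/X} = (n-1) E_1$. Writing $f_r = g^2 h$ and denoting by $\bar g, \bar h$ the corresponding forms viewed on $E_1$, the scheme $F' \cap E_1$ is cut out by $\bar g^2 \bar h$ and is non-reduced along the irreducible codimension-one subvariety $W := \{\bar g = 0\} \subset E_1$. At a smooth point $q \in W$ at which $\bar h$ does not vanish, pick local coordinates $(t, s_1, \dots, s_{n-2}, z)$ on $Y_1$ near $q$ with $E_1 = \{z = 0\}$ and $W = \{t = z = 0\}$ (so $t$ and $\bar g$ agree up to a unit). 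The local equation of $F'$ near $q$ then takes the form
\[
f'(t, s, z) \;=\; t^2\, u(t, s) \;+\; z\, \bar f_{r+1}(t, s) \;+\; z^2\, \bar f_{r+2}(t, s) \;+\; \cdots,
\]
with $u$ a unit at $q$. Let $\pi_2 : Y_2 \to Y_1$ be the weighted blow-up of (the smooth locus of) $W$ with weights $(\alpha, \beta)$ on the normal variables $(t, z)$, and let $E_W$ be its exceptional divisor. Using $\pi_2^* E_1 = \widetilde{E_1} + \beta E_W$ together with the discrepancy formula $K_{Y_2/Y_1} = (\alpha + \beta - 1) E_W$ for a weighted blow-up of a smooth codimension-two subvariety, one computes
\[
b_{E_W} \;=\; \alpha + n\beta - 1, \qquad a_{E_W} \;=\; r\beta + \mathrm{ord}_{(\alpha, \beta)}(f'),
\]
where $b_{E_W}$ and $a_{E_W}$ are the coefficients of $E_W$ in $K_{Y_2/X}$ and $\pi_2^* \pi_1^* f$ respectively, and $\mathrm{ord}_{(\alpha, \beta)}$ denotes the weighted order in the variables $(t, z)$.

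For inequality (\ref{c_0-2}), the hypothesis $g \mid f_{r+1}$ implies $\bar g \mid \bar f_{r+1}$ on $E_1$, so $t \mid \bar f_{r+1}$ near $q$, whence every monomial of $f'$ has $(1, 1)$-weight at least $2$; taking $(\alpha, \beta) = (1, 1)$ yields $a_{E_W} \geq r + 2$ and $b_{E_W} = n$, giving $c_0(f) \leq (n + 1)/(r + 2)$. For inequality (\ref{c_0-1}), with only $g^2 \mid f_r$, take $(\alpha, \beta) = (1, 2)$: the term $t^2 u$ has $(1, 2)$-weight exactly $2$, while each term $z^k \bar f_{r+k}$ has $(1, 2)$-weight at least $2k \geq 2$, so $\mathrm{ord}_{(1, 2)}(f') = 2$, giving $a_{E_W} = 2r + 2$ and $b_{E_W} = 2n$, hence $c_0(f) \leq (2n + 1)/(2r + 2) = (n + 1/2)/(r + 1)$. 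The main subtlety is that $W$ need only be generically smooth, so the global weighted blow-up may fail to exist as literally described; this is inessential, because the divisorial valuation $v = \mathrm{ord}_{E_W}$ is unambiguously specified by its monomial description in local coordinates at a smooth point of $W$, and the inequality $c_0(f) \leq \mathrm{ld}(v)/v(f)$ holds for every divisorial valuation $v$ centered over the origin.
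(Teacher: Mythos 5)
Your proof is correct in substance, but it takes a genuinely different route from the paper's. The paper derives~(\ref{c_0-1}) by applying Proposition~\ref{geomcfr} (itself proved via Musta\c t{\v{a}}'s arc-space characterization of the log-canonical threshold) together with the trivial bound $c(f_r)\leq 1/2$ for a non-reduced $f_r$, and then establishes~(\ref{c_0-2}) by a direct cylinder estimate in the arc space. You instead build a single explicit divisorial valuation $v=\ord_{E_W}$ -- blow up the origin, then do a weighted blow-up of the non-reduced locus $W$ of the tangent cone with weights $(1,2)$ for~(\ref{c_0-1}) and $(1,1)$ for~(\ref{c_0-2}) -- and read off $c_0(f)\leq \nu/N$ directly from the Definition~\ref{c(f)} infimum. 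Your argument is self-contained (it bypasses Proposition~\ref{geomcfr} entirely), is closest in spirit to the paper's ``alternative proof'' of Proposition~\ref{geomcfr}, and makes the bound concretely visible as a monomial valuation; the paper's arc-space route is more modular and generalizes more readily (cf.\ Theorem~\ref{geomcfe}). Your handling of the singular locus of $W$ is fine: the divisorial valuation is intrinsic, and a further resolution (isomorphism at the generic point of $E_W$) lands you on a smooth model with the same $(\nu,N)$, as Definition~\ref{c(f)} requires.

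Two small points you should tighten. First, the preliminary step ``passing to the top homogeneous component of $g$'' is a red herring: since $f_r$ is homogeneous and nonzero and $g^2\mid f_r$, a degree comparison of the lowest and highest homogeneous parts of $g$ already forces $g$ to be homogeneous, so only the reduction to an irreducible factor of $g$ is actually needed. Second, and more substantively, you write $f_r=g^2 h$ and then ``pick a smooth point $q\in W$ at which $\bar h$ does not vanish'' -- but if $g$ happens to divide $h$ as well (i.e.\ $g^3\mid f_r$), then $\bar h$ vanishes identically on $W$ and no such $q$ exists. Fix this by writing $f_r=g^m h$ with $m\geq 2$ maximal, so that $g\nmid h$ and a generic smooth $q\in W$ has $\bar h(q)\neq 0$; locally $f_r=t^m u$ with $u$ a unit, and since $m\geq 2$ the weighted orders still satisfy $\ord_{(1,2)}(f')\geq 2$ and $\ord_{(1,1)}(f')\geq 2$, which is all you use. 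With that adjustment the argument is complete.
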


Lemma \ref{geomc} will be obtained as a corollary of the following sharper and unconditional bounds, which we think are of independent interest.

\begin{prop}\label{geomcfr}
With notation from (\ref{ffr}), one has
\begin{equation}\label{c_0}
(r+1) c_0(f) \leq n + c(f_r).
\end{equation}
\end{prop}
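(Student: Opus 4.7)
My plan is to bound $c_0(f)$ from above by testing the pair $(\mathbb{A}^n, c_0(f)\cdot\mathrm{div}(f))$ against a carefully chosen quasimonomial valuation on the blow-up of the origin. Let $\pi : X \to \mathbb{A}^n$ denote this blow-up, with exceptional divisor $E \cong \mathbb{P}^{n-1}$ of discrepancy $n-1$, and let $\tilde{F}$ be the strict transform of $\{f=0\}$; its intersection with $E$ is the projective hypersurface $Z := \{f_r = 0\} \subset \mathbb{P}^{n-1}$. I would pick $\tilde{y} \in Z$ realizing $c(Z) := \min_{p \in Z} c_p(Z)$ together with a divisorial valuation $w$ on $E$ centered at $\tilde{y}$ satisfying $A_E(w)/w(g) = c(Z)$, where $g$ is a local defining function of $Z$ on $E$. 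In a local chart of $X$ near $\tilde{y}$ with coordinates $(t, u_2, \ldots, u_n)$ adapted so that $E = \{t = 0\}$ and $u$ pulls back to coordinates on $E$, one has $\pi^{*}f = t^r \tilde{f}$ with $\tilde{f}(t,u) = g(u) + t\, h(u) + O(t^2)$.

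Next I would introduce the quasimonomial valuation $v_\lambda$ on $X$ centered at $\tilde{y}$ characterized by $v_\lambda(t) = \lambda$ and $v_\lambda(\varphi(u)) = w(\varphi)$ for $\varphi$ a function of $u$ alone. Realizing $v_\lambda$ on a common SNC model for the divisorial valuation $w$ and the divisor $E$ gives $A_X(v_\lambda) = \lambda + A_E(w)$, and combined with $K_{X/\mathbb{A}^n} = (n-1)E$ one obtains
\[
A_{\mathbb{A}^n}(v_\lambda) \;=\; A_X(v_\lambda) + (n-1)\,v_\lambda(E) \;=\; n\lambda + A_E(w).
\]
The key calculation is to take $\lambda = w(g)$: since every coefficient of $\tilde{f}$ as a power series in $t$ has nonnegative $w$-value, $v_\lambda(\tilde{f}) = \min(w(g),\, \lambda + w(h),\, \ldots) = w(g) = \lambda$, and hence $v_\lambda(f) = r\lambda + v_\lambda(\tilde{f}) = (r+1)\lambda$. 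The valuative characterization $c_0(f) \leq A_{\mathbb{A}^n}(v_\lambda)/v_\lambda(f)$ now rearranges to $(r+1)\,c_0(f) \leq n + c(Z)$.

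To finish, I combine this with the trivial bound $c_0(f) \leq n/r$, producing $(r+1) c_0(f) \leq n + \min(c(Z),\, n/r)$, and identify $c(f_r) = \min(c(Z),\, n/r)$. By definition $c(f_r) = \min(c_0(f_r),\, c(Z))$; the $\mathbb{G}_m$-scaling $x \mapsto t\cdot x$ together with lower semicontinuity of the lct along lines through the origin gives $c_0(f_r) \leq c(Z)$, and the trivial bound gives $c_0(f_r) \leq n/r$, so $c(f_r) = c_0(f_r) \leq \min(c(Z),\, n/r)$. The reverse inequality $c_0(f_r) \geq \min(c(Z),\, n/r)$ is a standard consequence of inversion of adjunction applied to the pair $(X, c\tilde{F}_r + E)$ along $E$, noting that for $c \leq c(Z)$ the pair $(E, cZ)$ is log-canonical. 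The case $n = 1$, where $Z$ is empty, is a direct computation giving equality.

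The main obstacle I anticipate is a clean verification of the log-discrepancy formula $A_X(v_\lambda) = \lambda + A_E(w)$ for the quasimonomial valuation, realized via a common SNC model associated to $w$ combined with the divisor $E$; once this standard fact (together with the identification $c(f_r) = \min(c(Z), n/r)$, essentially the well-known formula for the log-canonical threshold at the vertex of an affine cone) is in hand, the remaining steps follow from routine manipulations of the definitions.
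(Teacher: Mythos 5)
Your proof is correct, and it takes a genuinely different route from both proofs in the paper. The paper's first proof is arc-theoretic: it picks $k$ with $c(f_r)=\codim\Cont^{\geq k}(f_r)/k$, builds the cylinder $B=\{0\}\times t^{k}\Cont^{\geq k}(f_r)\subset\Cont_0^{\geq k(r+1)}(f)$, and reads off the bound from $\codim B$ via Musta\c t\u a's formulas; the second proof blows up the origin, resolves $f_r$ by centres transversal to $E_0$, locates the divisor computing $c(f_r)=\nu/N$, and then performs an explicit chain of $k\leq N$ further blow-ups along $E\cap E_0$ to produce a divisor with discrepancy data $(kn+\nu, kr+N)$. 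Your argument shares the starting point of the second proof (the blow-up at the origin and the role of $Z=\{f_r=0\}\subset E$), but replaces the explicit blow-up chain by an abstract quasimonomial valuation $v_\lambda$ and a log-discrepancy computation $A_{\mathbb A^n}(v_\lambda)=n\lambda+A_E(w)$; this is essentially the ``weighted blow-up'' shortcut that the paper alludes to in its remark after the alternative proof. Small points worth noting: (i) you only need $v_\lambda(\tilde f)\geq\lambda$, not equality (the inequality follows immediately from $v_\lambda(g)=\lambda$, $v_\lambda(t^kh_k)\geq k\lambda\geq\lambda$, and the ultrametric inequality), so the power-series/no-cancellation discussion can be avoided; (ii) unlike both of the paper's arguments, your proof leans on the cone formula $c(f_r)=\min\bigl(c(Z),\,n/r\bigr)$, and the direction you actually need, namely $\min(c(Z),n/r)\leq c_0(f_r)$, is the non-trivial inversion-of-adjunction half (the easy direction $c(f_r)\leq\min(c(Z),n/r)$ by lower semicontinuity and the trivial bound is the wrong one for your conclusion). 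So your argument is shorter and more conceptual once one has the valuation machinery and IoA available, while the paper's proofs are more self-contained: the first needs only Musta\c t\u a's contact-locus results, and the second only elementary multiplicity bookkeeping on blow-ups.
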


One should compare (\ref{c_0})
with the bound $|c_0(f) - c(f_r)| \leq n/(r+1)$ from Proposition 8.19 of \cite{Kollar}.
A generalization of Proposition \ref{geomcfr}, with a bound
for $(e+1)c_0(f)$ for arbitrary $e>0$, is given in Section \ref{recursive}, see Theorem \ref{geomcfe}.
By combining Lemma \ref{geomc} with results from \cite{DimcaMST}, we obtain  global variants.
\begin{prop}\label{geom2}
Let $r>1$ be an integer and let $f$ be a polynomial in $n$ variables over $\CC$.
Suppose, for $y$ running over an irreducible $d$-dimensional variety $Y\subset \CC^n$, that $f$ vanishes with order at least $r$ at $y$.
For $y\in Y$, let us write $f_y(x)$ for the polynomial $f(x+y)$ in the variables $x$, and $f_y = \sum_{i\geq r} f_{y,i}$ with $f_{y,i}$ either identically zero or homogeneous and of degree $i$.
Then one has
\begin{equation}\label{c_0b}
r c_y(f) \leq n - d
\end{equation}
and, for a generic $y\in Y$,
\begin{equation}\label{c_0-r}
(r+1) c_y(f) \leq n - d + c(f_{y,r}).
\end{equation}
In particular, for a generic $y\in Y$, if $f_{y,r}$ is non-reduced, then
\begin{equation}\label{c_0-1b}
(r+1) c_y(f) \leq n - d + \frac{1}{2}.
\end{equation}
If, for a generic $y\in Y$, there is a non-constant polynomial $g_y$ which divides $f_{y,r+1}$ 
and such that $g_y^2$ divides $f_{y,r}$, then one further has
\begin{equation}\label{c_0-2b}
(r+2) c_y(f) \leq n -d + 1.
\end{equation}
\end{prop}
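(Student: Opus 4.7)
The plan is to handle \eqref{c_0b} directly by an ideal containment argument, and to deduce \eqref{c_0-r}, \eqref{c_0-1b}, \eqref{c_0-2b} by transferring Proposition \ref{geomcfr} and Lemma \ref{geomc} from the origin in $\CC^n$ to a generic point $y\in Y$ via a transversal-slice reduction based on the results of \cite{DimcaMST}.

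For \eqref{c_0b}, the hypothesis forces $f$ to lie in $\mathfrak{m}_Y^r$, the $r$-th power of the ideal sheaf of $Y$. Hence $c_y(f)\le c_y(\mathfrak{m}_Y^r)=c_y(\mathfrak{m}_Y)/r$, and since the standard codimension bound $c_y(\mathfrak{m}_Y)\le \codim_y Y = n-d$ holds at every point of the irreducible subvariety $Y$, one obtains $r\,c_y(f)\le n-d$ for all $y\in Y$.

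For the remaining, generic-$y$ inequalities, I would fix a smooth point $y\in Y$ at which all the generic behaviour holds, and choose a sufficiently general affine subspace $L\subset\CC^n$ of codimension $d$ through $y$ transverse to $Y$. The input from \cite{DimcaMST} supplies the key identity $c_y(f)=c_0(g)$, where $g$ denotes $f|_L$ after translating $y$ to $0$ and identifying $L$ with $\CC^{n-d}$. By the transversality, $g$ has multiplicity exactly $r$ at $0$, with leading homogeneous part $g_r=f_{y,r}|_{L_0}$, where $L_0$ is the linear part of $L$; the same input further gives $c(g_r)\le c(f_{y,r})$ for generic choice of $L$. Proposition \ref{geomcfr} applied to $g$ in $n-d$ variables then yields
$$(r+1)\,c_y(f) \;=\; (r+1)\,c_0(g)\;\le\;(n-d)+c(g_r)\;\le\;(n-d)+c(f_{y,r}),$$
which is \eqref{c_0-r}.

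Bound \eqref{c_0-1b} follows immediately from \eqref{c_0-r} together with the observation \eqref{trivial0} that $c(f_{y,r})\le\frac{1}{2}$ whenever $f_{y,r}$ is non-reduced. For \eqref{c_0-2b}, the hypothesis that $g_y^2\mid f_{y,r}$ and $g_y\mid f_{y,r+1}$ for generic $y\in Y$ implies that $(g_y|_{L_0})^2$ divides $g_r$ and $g_y|_{L_0}$ divides $g_{r+1}$ for generic $L$; applying the second part of Lemma \ref{geomc} to $g$ in $n-d$ variables then gives $(r+2)\,c_y(f)=(r+2)\,c_0(g)\le(n-d)+1$. The main obstacle in this plan is the correct use of \cite{DimcaMST}, both to obtain the identity $c_y(f)=c_0(g)$ and to control $c(g_r)$ in terms of $c(f_{y,r})$ under a generic transversal restriction; once these inputs are secured, the remainder is a direct application of the already-established local statements.
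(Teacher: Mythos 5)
Your treatment of (\ref{c_0-r}), (\ref{c_0-1b}) and (\ref{c_0-2b}) follows essentially the paper's route: Theorem 1.2 of \cite{DimcaMST} gives $c_y(f)=c_0(f_y|_H)$ for generic $y\in Y$ and a generic $(n-d)$-dimensional linear subspace $H$, and one then applies (\ref{trivial}), Proposition \ref{geomcfr} and Lemma \ref{geomc} to the restricted polynomial $g=f_y|_H$ in $n-d$ variables. The auxiliary facts you flag are indeed available: for generic $H$ the restriction has order exactly $r$ with leading form $g_r=f_{y,r}|_H$; the inequality $c(g_r)\le c(f_{y,r})$ is the standard restriction inequality for log-canonical thresholds (both sides equal their values at the origin by homogeneity), not really an output of \cite{DimcaMST}; and the non-reducedness and divisibility hypotheses on $f_{y,r}$, $f_{y,r+1}$ pass to the generic slice.

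The genuine gap is in your proof of (\ref{c_0b}). From ``$f$ vanishes with order at least $r$ at every $y\in Y$'' you conclude $f\in\mathfrak{m}_Y^r$, the ordinary $r$-th power of the ideal of $Y$. By Zariski--Nagata the hypothesis only yields $f\in I_Y^{(r)}$, the $r$-th \emph{symbolic} power; this agrees with $I_Y^r$ when $Y$ is smooth (or a local complete intersection) at the point in question, but is in general strictly larger, and your next step $c_y(f)\le c_y(\mathfrak{m}_Y^r)=c_y(\mathfrak{m}_Y)/r$ needs the ordinary power. So as written your argument establishes (\ref{c_0b}) at smooth points of $Y$ but not at singular ones. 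To get all of $Y$ (which is also what the paper's generic-$y$ slicing argument implicitly needs for the unrestricted statement (\ref{c_0b})), one should combine the bound at smooth points with lower semi-continuity of $y\mapsto c_y(f)$: the set $\{y\in Y: c_y(f)\le (n-d)/r\}$ is Zariski closed in $Y$ and contains the dense open locus of smooth points, hence is all of $Y$.
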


The proofs of Theorem \ref{rplus1}, Proposition \ref{geomcfr}, Lemma
\ref{geomc} and the global variants are given in Section
\ref{proofs}.

\subsection{Some context and notation}\label{not}

Conjecture \ref{new} is known when the implied constant is allowed
to depend on the prime number $p$, see \cite{Igusa3} and
\cite{DenefVeys}. Namely, for each prime $p$ there exists a function
$L_{F,p}:\NN\to \NN$ with $L_{F,p}(m) \ll m^{n-1}$, such that for
all $m\geq 2$ and all $y\in\ZZ_p^n$, both estimates
\begin{equation}\label{fix1}
| S(F,p,m) |_\CC \leq L_{F,p}(m) p^{-m a(F)}
\end{equation}
and
\begin{equation}\label{fixp}
| S_y(F,p,m) |_\CC \leq L_{F,p}(m) p^{-m a_{y,p}(F)}
\end{equation}
hold.
In the case that $F$ is non-degenerate with respect to (the compact faces of) the Newton polyhedron at zero of $F$, then the bounds (\ref{1y}) with $y=0$ hold, see \cite{DenSper} and \cite{CDenSperlocal}. If $F$ is non-degenerate and quasi-homogeneous, then also the bounds from (\ref{1}) hold, by \cite{DenSper} and \cite{CDenSperlocal}.
For other work on Igusa's original conjecture, we refer to
\cite{CDenSper}, \cite{Cigumodp}, \cite{Lichtin4}, \cite{JWright}.
Lemma 5.4 of \cite{Birch} gives other evidence for Conjecture
\ref{new}, under some specific geometric conditions. Related
exponential sums in few variables (namely with small $n$) have been
studied in \cite{Lichtin4}, \cite{JWright} and in \cite{CochZheng1},
\cite{CochZheng2}.
 \\

Below we will write $|\cdot|$ instead of $|\cdot|_\CC$ for the
complex norm. For complex valued functions $H$ and $G$ on a set $Z$,
the notation $H\ll G$ means that there exists a constant $c>0$ such
that $|H(z)|\leq c |G(z)|$ for all $z$ in $Z$.
 All integrals over $K^n$, for any non-archimedean local field $K$
with valuation ring $\cO_K$, will be against the Haar measure $|dx|$
on $K^n$, normalized so that $\cO_K^n$ has measure $1$.
 We write $\FF_p^{\rm alg}$ for an algebraic closure of $\FF_p$, the field with $p$ elements.

\section{Proofs of the main results}\label{proofs}

We first recall two descriptions of the log-canonical threshold.

\begin{defn}\label{c(f)}
For a non-constant polynomial $f$ in $n$ variables over an
algebraically closed field $K$ of characteristic zero, and $y\in
K^n$ satisfying $f(y)=0$, the log-canonical threshold of $f$ at $y$
is denoted by $c_y(f)$ and defined as follows. For any proper
birational morphism $\pi:Y\to K^n$ from a smooth variety $Y$, and
for any prime divisor $E$ on $Y$, we denote by $N$ and $\nu-1$ the
multiplicities along $E$ of the divisors of $\pi^*f$ and
$\pi^*(dx_1\wedge\dots\wedge dx_n)$, respectively. Then
$$
c_y(f) = \inf_{\pi,E} \{\frac{\nu}N\} ,
$$
where $\pi$ runs over all $\pi$ as above and $E$ over all prime
divisors on $Y$ such that $y\in\pi(E)$. For a polynomial $f$ over a
non-algebraically closed field $k$ of characteristic zero and $y\in
k^n$ satisfying $f(y)=0$, one defines $c_y(f)$ as above with $K$ any
algebraic closure of $k$. Finally, when $f$ is the zero
polynomial, one defines $c(f)$ as $0$.

In fact $c_y(f)=\min_E \{\frac{\nu}N\}$, where $\pi$ is any fixed embedded resolution of the germ of $f=0$ at $y$ (and $y\in\pi(E)$).
Note that always $c_y(f)\leq 1$, a property not shared by the motivic oscillation index of $f$, and neither by the complex oscillation index of $f$, see \cite[Chapter 13, and, p. 203]{Arnold.G.V.II}, \cite{Malgrange}, \cite{Cigumodp}.
\end{defn}

By Musta\c t{\v{a}}'s Corollaries 0.2 and 3.6 of \cite{MustJAMS}, we can
describe the log-canonical threshold by taking certain codimensions,
as follows.

Let $p$ be an integer and $h$ a nonconstant polynomial over $\CC$ in $n$ variables. Write $\Cont^{\geq p}(h)$ for the subset of $\CC[[t]]^n$ given by
$$
\{x\in \CC[[t]]^n\mid  h(x)\equiv 0 \bmod (t^p)\}
$$
and $\Cont_0^{\geq p}(h)$ for
$$
\{x\in \CC[[t]]^n\mid \ord_t h(x)\equiv 0 \bmod (t^p),\, x\in (t\CC[[t]])^n\}.
$$
Let us further write
$$
\codim \Cont^{\geq p}(h)
$$
for the codimension of $\rho_m(\Cont^{\geq p}(h))$ in
$\rho_m(\CC[[t]]^n)$ for any $m\geq p$, where $\rho_m:\CC[[t]]^n\to
(\CC[t]/(t^{m+1}))^n$ is the projection modulo $t^{m+1}$ in each
coordinate. Here, $\rho_m(\Cont^{\geq p}(h))$ is seen as a Zariski
closed subset of $\CC^{n(m+1)}\cong \rho_m(\CC[[t]]^n)$. The
definition is independent of the choice of $m$. We write similarly
$\codim \Cont_0^{\geq p}(h)$ for the codimension of
$\rho_m(\Cont_0^{\geq p}(h))$ in $\rho_m(\CC[[t]]^n)$ for any $m\geq
p$.

By Corollary 0.2 of \cite{MustJAMS}, for all integers $k>0$, we have
\begin{equation}\label{=}
c(h) \leq \frac{\codim \Cont^{\geq k}(h)}{k}
\end{equation}
and there exist infinitely many $k>0$ for which equality holds.
Also, if $h$ vanishes at $0$, one has by Corollary 3.6 of \cite{MustJAMS} that
\begin{equation}\label{<=}
c_0(h) = \inf_{k>0} \frac{\codim \Cont_0^{\geq k}(h)}{k}.
\end{equation}
Based on these relations, we can now prove Proposition \ref{geomcfr}.

\begin{proof}[Proof of Proposition \ref{geomcfr}]
By the equality statement concerning (\ref{=}) for $f_r$, there
exists $k>0$ such that
\begin{equation}\label{=2}
c(f_r) = \frac{\codim \Cont^{\geq k}(f_r)}{k}.
\end{equation}
Let $\ell$ be $kr+k$.
Now define the cylinder $B\subset \CC[[t]]^n$ as
$$
B:=\{x\in \CC[[t]]^n\mid \rho_{k-1}(x)=\{0\},\ \ord_t f_r(x) \geq
\ell\}.
$$
By the homogeneity of $f_r$, the cylinder $B$ can be considered
(under corresponding identifications), as
$$
\rho_{k-1}(B)\times t^k\Cont^{\geq k}(f_r) = \{0\}\times t^k\Cont^{\geq k}(f_r)\subset \CC[[t]]^n.
$$
Again by the homogeneity of $f_r$ and the fact that $f-f_r$ has multiplicity at least $r+1$, one has
$$
B\subset \Cont_0^{\geq \ell}(f).
$$
Hence, by (\ref{<=}), one finds
\begin{equation}\label{dimB}
c_0(f) \leq   \frac{\codim B}{\ell},
\end{equation}
where $\codim B$ is defined as the codimension of $\rho_m(B)$ in $\rho_m(\CC[[t]]^n)$ for large enough $m$.
On the other hand, one finds from (\ref{=}) that
$$
\codim B =  kn + \codim( \Cont^{\geq k}(f_r) ) =  kn +  k c(f_r).
$$
Using this together with (\ref{dimB}) and dividing by $k$, one finds (\ref{c_0}).
\end{proof}

It is also possible to give a proof for Proposition \ref{geomcfr} based on embedded resolution of singularities,
without using Musta\c t{\v{a}}'s formulas.

\begin{proof}[Alternative proof of Proposition \ref{geomcfr}]
 Let $\pi_0:Y_0\to\CC^n$ be the blowing-up at the
origin; its exceptional divisor $E_0$ is projective $(n-1)$-space.
We consider for example the chart on $Y_0$ where $E_0$ is given by
$x_1=0$ and $\pi_0^*f$ by
$$
x_1^r\Big(f_r(1,x_2,\dots,x_n)+x_1\sum_{i\geq
r+1}x_1^{i-r-1}f_i(1,x_2,\dots,x_n)\Big).
$$
Along $E_0$ the multiplicity of the pullback of
$dx=dx_1\wedge\dots\wedge dx_n$ is $n$ and the multiplicities of
both $\pi_0^*f$ and $\pi_0^*f_r$ are $r$.

We now perform a composition of blowing-ups $Y\to Y_0$, leading to
an embedded resolution $\pi:Y\to \CC^n$ of $f_r=0$. More precisely,
for example on the chart above, we only use centres \lq not
involving $x_1$\rq; hence they all have positive dimension and are
transversal to $E_0$.
 Say
$c(f_r)=\frac{\nu}N$, where $E$ is an exceptional component of $\pi$
such that along $E$ the multiplicities of the pullback of $dx$ and
$f_r$ are $\nu$ and $N$, respectively. We may assume that $E\neq
E_0$; otherwise $c(f_r)=\frac nr$ and the statement becomes trivial.

Consider analytic or \'etale coordinates $x_1,y_2,\dots,y_n$ in a
generic point of $E\cap E_0 \subset Y$ such that $E$ is given by
$y_2=0$. In that point $\pi^*f$ is of the form
$$
x_1^r\Big(y_2^N u(y_2,\dots,y_n)+x_1(\dots)\Big),
$$
where $u(y_2,\dots,y_n)$ is a unit. Next, we blow up $Y$ at the
codimension two centre $Z_1=E\cap E_0$ given (locally) by
$x_1=y_2=0$. Along the new exceptional divisor $E_1$ the
multiplicities of the pullback of $dx$ and $f$ are $n+\nu$ and
$r+\mu_1$, respectively, where $\mu_1 \geq 1$ is the order of
vanishing of $y_2^N u(y_2,\dots,y_n)+x_1(\dots)$, the strict
transform of $f$, along $Z_1$. In fact, in the relevant chart the
pullback of $f$ is now of the form
$$
x_1^r y_2^{r+\mu_1}\Big(y_2^{N-\mu_1}
u(y_2,\dots,y_n)+x_1(\dots)\Big).
$$
As long as $E_0$ intersects the strict transform of $f=0$, we
continue to blow up with centre this intersection, in the relevant
chart always given by $x_1=y_2=0$. Let $E_k$ be the last exceptional
component created this way. Then along $E_k$ the multiplicities of
the pullback of $dx$ and $f$ are $kn+\nu$ and
$kr+\sum_{i=1}^k\mu_i$, respectively, where the $\mu_i$ are the
orders of vanishing of the strict transform of $f$ along the centres
of blow-up. Note that $\sum_{i=1}^k\mu_i=N$. We just showed that
\begin{equation}\label{bound1}
c_0(f) \leq \frac{kn+\nu}{kr+N}.
\end{equation}
An elementary computation, using that $\frac{\nu}N \leq \frac nr$
and $k\leq N$, shows that
\begin{equation}\label{bound2}
 \frac{kn+\nu}{kr+N}\leq \frac {n+\frac{\nu}N }{r+1}=\frac {n+c(f_r) }{r+1}.
\end{equation}
Then combining (\ref{bound1}) and (\ref{bound2}) finishes the proof.
\end{proof}

\begin{remark} (1) The proof above can be
shortened by using a weighted blow-up instead of the last $k$
blow-ups.

(2) M. Musta{\c{t}}{\v{a}} informed us of yet another proof of
Proposition \ref{geomcfr}, using multiplier ideals.
\end{remark}

\smallskip

\begin{proof}[Proof of Lemma \ref{geomc}]
The inequality (\ref{c_0-1}) follows from (\ref{c_0}) and
(\ref{trivial0}) for $f_r$. For inequality (\ref{c_0-2}) and with
$g$ as in the lemma, consider the cylinder $C$ given by
$$
\{x\in \CC[[t]]^n\mid \rho_{0}(x)=0,\ \ord_t
g(\frac{x_1}{t},\ldots,\frac{x_n}{t}) \geq 1 \}.
$$
Then one easily verifies that
$$
C\subset \Cont_0^{\geq r+2}(f)
$$
and $\codim C = n+ 1$. The result now follows from Musta\c t{\v{a}}'s
bound as in (\ref{<=}) for $f$ and $k=r+2$.
\end{proof}

\begin{proof}[Proof of Proposition \ref{geom2}]
By Theorem 1.2 of \cite{DimcaMST}, one has for generic $y$ in $Y$
and a generic vector subspace $H$ of $\CC^n$ of dimension $n-d$ that
$$
c_0(f_{y|H}) = c_{y}(f),
$$
where $f_{y|H}$ is the restriction of the polynomial map $f_y$ to
$H$. The proposition now follows from the genericity of $y$ and $H$,
by (\ref{trivial}) and by Proposition \ref{geomcfr} and Lemma
\ref{geomc} applied to $f_{y|H}$.
\end{proof}

In the proof of our main theorems we will use the following lemmas. The first one follows almost directly from work by Katz in \cite{Katz} and Noether normalization.
\begin{lem}\label{lemKa}
Let $n,k,N$ be nonnegative integers. Then there exist constants $D$ and $E$ such that the following hold for all prime numbers $p$ with $p>E$, all positive powers $q$ of $p$, and all nontrivial additive characters $\psi_q$ on $\FF_q$.
Let $g_1,\ldots, g_k$ and $h$ be (nonconstant) homogeneous polynomials in $x=(x_1,\ldots,x_n)$ with coefficients in $\ZZ$ and of degree at most $N$. Let $X$ be the reduced subscheme of $\AA^n_\ZZ$ associated to the ideal $(g_1,\ldots,g_k)$.

If $h$ (modulo $p$) does not vanish on any irreducible component of
$X_p:= X\otimes \FF_p^{\rm alg}$ of dimension equal to $\dim X_p$,
then
\begin{equation}\label{e1}
| \sum_{y\in X(\FF_q)}  \psi_q\big(h(y)\big) | \leq
D\cdot q^{ \dim X_p - 1/2 }.
\end{equation}

If the image of  $h$ in $\FF_p^{\rm alg}[x]$ under $\ZZ[x]\to \FF_p^{\rm alg}[x]$ is reduced, then
\begin{equation}\label{e2}
| \sum_{y\in  \FF_q^n}  \psi_q\big(h(y)\big) | \leq
D\cdot q^{ n  - 1}.
\end{equation}
\end{lem}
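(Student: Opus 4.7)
The plan is to derive both bounds from Katz's estimates for exponential sums over affine varieties \cite{Katz}, combined with a standard spreading-out argument that ensures the constants $D$ and $E$ depend only on the combinatorial data $(n,k,N)$ and not on the particular polynomials or primes.

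For (e1), I view the exponential sum $\sum_{y \in X(\FF_q)} \psi_q(h(y))$ via the Grothendieck--Lefschetz trace formula as an alternating sum of Frobenius traces on the compactly supported \'etale cohomology $H^*_c(X_{\FF_p^{\rm alg}}, \cL_{\psi_q}(h))$, where $\cL_{\psi_q}(h)$ is the pull-back of the Artin--Schreier sheaf along $h$. The hypothesis that $h$ does not vanish identically on any $\dim X_p$-dimensional component, together with homogeneity of $h$ and $h(0)=0$ (so that nowhere-identically-zero forces non-constancy on each such component), places us in the setting of Katz's theorem and yields the square-root-cancellation bound $D \cdot q^{\dim X_p - 1/2}$. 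The constant $D$ is controlled by total Betti numbers of $X_{\FF_p^{\rm alg}}$, which are in turn bounded by universal functions of $n,k,N$.

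For (e2), I apply the same Katz framework with $X=\AA^n$ and write $T:=\sum_{y \in \FF_q^n} \psi_q(h(y))$. The reducedness of $h$ in $\FF_p^{\rm alg}[x]$ means that $V(h)\subset \AA^n$ is a reduced hypersurface, which is precisely the condition under which the compactly supported cohomology $H^*_c(\AA^n, \cL_{\psi_q}(h))$ has its top weight sufficiently controlled to yield the stronger bound $|T|\leq D\cdot q^{n-1}$. An alternative, more elementary route exploits the homogeneity of $h$ of degree $d$: decomposing $T$ by the $\FF_q^*$-action $y\mapsto \lambda y$ and expanding $\sum_{\lambda \in \FF_q^*}\psi_q(\lambda^d c)$ for $c\neq 0$ in multiplicative characters $\chi$ of $\FF_q^*$ with $\chi^d=\chi_0$, one expresses $T$ as a combination of a projective point count $V_0 = |V(h)(\FF_q)\cap \PP^{n-1}(\FF_q)| = O(q^{n-2})$ (by Lang--Weil) and products of Gauss sums $\tau(\chi,\psi_q)$ of modulus $\sqrt{q}$ with multiplicative character sums $M(\chi)=\sum_{[z]\in \PP^{n-1}(\FF_q)\setminus V(h)} \overline{\chi}(h([z]))$, for which the reducedness of $h$ gives square-root cancellation $M(\chi)=O(q^{n-3/2})$ via standard Katz-type bounds, yielding $|T|=O(q^{n-1})$.

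The uniformity in $p$ for $p>E$ rests on a standard spreading-out argument: all the relevant geometric data (varieties, morphisms, Betti numbers, numbers and dimensions of irreducible components over both $\overline{\QQ}$ and $\FF_p^{\rm alg}$) are encoded in finitely generated algebras over $\ZZ$ and stabilize for all $p$ above some $E = E(n,k,N)$. Noether normalization enters as a technical device to arrange $h$ (or the generators of the ideal defining $X$) in a convenient normal form---for example, $h$ monic in a distinguished variable---which makes Katz's hypotheses immediately verifiable. The main obstacle is to identify the precise form of Katz's theorem that produces the exponent $n-1$ in (e2) under the bare reducedness hypothesis, rather than the generic $n-1/2$; this is the technical heart of the proof, and I expect it to require either a careful reading of \cite{Katz} or the multiplicative character sum argument sketched above.
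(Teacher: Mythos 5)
Your overall plan matches the paper's: both bounds are read off from Katz's theorems in \cite{Katz}, with uniformity in $p>E$ obtained by spreading out, bounding Betti numbers in terms of $(n,k,N)$. For (\ref{e1}) your cohomological description is essentially the content of Katz's Theorem 5, which the paper cites; the only difference is how reducibility of $X_p$ is handled. The paper reduces the reducible case to the irreducible one by Noether normalization (projecting so that each top-dimensional component becomes generically finite over a common base and summing the contributions), whereas you fold it into the Betti-number bookkeeping; both work. Your speculative use of Noether normalization to put $h$ in ``monic normal form'' is not what the paper does and is not needed.

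Where your proposal wobbles is (\ref{e2}), which you flag as the ``main obstacle.'' That hedge is unwarranted: the paper asserts, correctly, that (\ref{e2}) follows immediately from Katz's Theorem 4. The key observation you are missing is that $h$ is itself homogeneous, so in Katz's Theorem 4 the ``leading form'' of the phase is $h$ itself. Reducedness of $h\bmod p$ means $V(h)\subset\PP^{n-1}_{\FF_p^{\rm alg}}$ is a reduced hypersurface of dimension $n-2$, whose singular locus is therefore a \emph{proper} closed subset, hence of dimension $\delta\leq n-3$. Katz's Theorem~4 bounds the sum by a constant times $q^{(n+1+\delta)/2}$, which with $\delta\leq n-3$ gives exactly $q^{n-1}$; reducedness is precisely the hypothesis that rules out $\delta=n-2$ and a weaker exponent. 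So no careful re-reading or alternate argument is required. Your sketched alternative via the $\FF_q^{\times}$-scaling action, Gauss sums and multiplicative character sums over $\PP^{n-1}\setminus V(h)$ is a genuinely different route, but it is not more elementary: it trades Katz's Theorem~4 for nontrivial bounds on twisted multiplicative character sums (also Katz-type), and one must still control the $[y]$ with $h([y])=0$ via Lang--Weil, so it buys nothing over the direct citation.
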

\begin{proof}
The bounds in (\ref{e2}) follow immediately from Katz \cite{Katz}, Theorem 4.
In the case that $X_p$ is irreducible, the bounds in (\ref{e1}) follow from Theorem 5 of \cite{Katz}. The remaining case that $X_p$ is reducible follows from the irreducible case and Noether normalization.
\end{proof}

From now on, let $F$ and $r$ 
be as in the introduction.
We will use some instances of the Ax-Kochen principle, Theorem 6 of \cite{AK1}, like the following lemma. 

\begin{lem}\label{AK2}
For large enough $p$, any $v\in \FF_p^n$, and any $y\in \ZZ_p^n$ lying above $v$, the following holds. If the reduction of $F$ modulo $p$ vanishes with order $r$ at $v$, then
$$
 \ord ( F(y) )\geq r,
$$
where $\ord$ is the $p$-adic order $\QQ_p\to\ZZ\cup\{+\infty\}$.
\end{lem}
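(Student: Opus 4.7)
The plan is to express the stated implication as a single first-order sentence in the Denef-Pas language of valued fields and then transfer it from the equi-characteristic local field $\FF_p\llp t\rrp$ to $\QQ_p$ by the Ax-Kochen principle of \cite[Theorem 6]{AK1}.

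First I would encode the hypothesis. For $p$ exceeding $\deg F$, the Taylor coefficients $c_\alpha(v):=\frac{1}{\alpha!}\partial^\alpha F(v)$ are well-defined modulo $p$, and the condition ``$\bar F$ vanishes at $v$ with order at least $r$'' is equivalent to the simultaneous vanishing $c_\alpha(v)=0$ in $\FF_p$ for all $|\alpha|<r$. These are explicit integer polynomials in $v$, so the hypothesis is a first-order condition on the residue-field point $v$. Combined with the universal quantifier over $y$ in the valuation ring whose residue is $v$ and with the conclusion $\ord F(y)\geq r$, the whole implication becomes a single first-order sentence $\varphi_{F,r}$ in the valued-field language.

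Next I would verify $\varphi_{F,r}$ directly in $(\FF_p\llp t\rrp,\FF_p\llb t\rrb)$ for every prime $p$. Writing $y=\tilde v+tz$ with $\tilde v\in\FF_p^n$ any lift of $v$ and $z\in\FF_p\llb t\rrb^n$, and using that $F$ coincides with its reduction $\bar F$ over $\FF_p$, Taylor expansion of $\bar F$ around $\tilde v$ gives
$$
F(y)=\sum_\alpha c_\alpha(v)\,t^{|\alpha|}z^\alpha.
$$
The hypothesis kills every term with $|\alpha|<r$, and the remaining terms all have $t$-adic valuation at least $r$, so $\ord_t F(y)\geq r$. Applying Ax-Kochen now transports $\varphi_{F,r}$ to $(\QQ_p,\ZZ_p)$ for all sufficiently large $p$, yielding the lemma.

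The main point to be careful about — and the reason Ax-Kochen is genuinely needed rather than a direct $p$-adic calculation — is that the naive mixed-characteristic Taylor expansion $F(\tilde v+pw)=\sum c_\alpha(\tilde v)\,p^{|\alpha|}w^\alpha$ only yields $\ord c_\alpha(\tilde v)\geq 1$ from the hypothesis, so the $\alpha=0$ term $F(\tilde v)$ contributes valuation merely $\geq 1$ and not $\geq r$. It is precisely on the equi-characteristic side, where the residue-field vanishing $c_\alpha(v)=0$ is literal rather than congruential, that the Taylor argument becomes clean; Ax-Kochen is what transports this cleanness back to mixed characteristic.
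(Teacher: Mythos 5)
Your proposal is correct, and it is essentially the same argument the paper has in mind (the paper's proof is a one-line sketch invoking a "simple statement over a discrete valuation ring of equicharacteristic zero" plus "a standard ultraproduct argument (namely by the Ax-Kochen principle)"). The only cosmetic difference is the choice of side to which you transfer: you verify the first-order sentence $\varphi_{F,r}$ directly in $\FF_p\llp t\rrp$ for $p>\deg F$ (where the constant lift $\FF_p\hookrightarrow\FF_p\llb t\rrb$ makes the Taylor expansion literal) and then invoke Ax--Kochen's Theorem 6 to get it in $\QQ_p$ for $p\gg0$; the paper instead takes an ultraproduct of the $\QQ_p$'s to land in a Henselian valued field of equicharacteristic zero, where the existence of a coefficient field makes the same Taylor computation literal. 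These are two standard, interchangeable packagings of the same transfer principle, and your last paragraph correctly identifies why a naive mixed-characteristic Taylor argument fails and hence why some such transfer is genuinely needed. One small remark for precision: the sentence $\varphi_{F,r}$ need not quantify separately over a residue-sort variable $v$; it can be stated purely in the valued-field sort as "$\forall y\in\cO^n$, if $\ord(\partial^\alpha F(y))\geq 1$ for all $|\alpha|<r$ then $\ord(F(y))\geq r$", which is manifestly first-order in the one-sorted language of valued fields and avoids any worry about the encoding of the residue map.
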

\begin{proof}
The statement is easily reduced to a simple statement over a discrete valuation ring of equicharacteristic zero. One finishes by a standard ultraproduct argument (namely by the Ax-Kochen principle).
\end{proof}

\begin{lem}\label{smooth}
Let $V$ be the subscheme of $\AA_\ZZ^n$ given by the equations $\grad F=0$.
If $p$ is large enough, then one has for any $m>1$ that
$$
S(F,p,m)   =  \sum_{v\in V(\FF_p)} \int_{u\in\ZZ_p^n,\ u\equiv v\bmod p}  \exp ( 2\pi i \frac{F(u)}{p^m} )   |du|
$$
and that $S_y(F,p,m)=0$ whenever the reduction of $y$ modulo $p$ does not lie in $V(\FF_p)$.
\end{lem}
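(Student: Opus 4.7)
The plan is to recognize both $S(F,p,m)$ and $S_y(F,p,m)$ as $p$-adic integrals, partition the domain modulo $p$, and run a first-order $p$-adic stationary phase argument to kill all contributions away from the critical locus. Since $F(u)/p^m$ modulo $\ZZ$ depends only on $u \bmod p^m$ and each residue class mod $p^m$ has Haar measure $p^{-mn}$, I first rewrite
$$
S(F,p,m) = \int_{\ZZ_p^n} \exp\left(2\pi i \frac{F(u)}{p^m}\right) |du|, \qquad S_y(F,p,m) = \int_{u \in y + p\ZZ_p^n} \exp\left(2\pi i \frac{F(u)}{p^m}\right) |du|.
$$
Partitioning $\ZZ_p^n = \bigsqcup_{v \in \FF_p^n} (\tilde v + p\ZZ_p^n)$, where $\tilde v$ denotes any lift of $v$, reduces the first equality to showing that the inner integral
$$
I_v := \int_{u \in \tilde v + p\ZZ_p^n} \exp\left(2\pi i \frac{F(u)}{p^m}\right) |du|
$$
vanishes for every $v \notin V(\FF_p)$; the second assertion $S_y(F,p,m) = 0$ when $\bar y \notin V(\FF_p)$ is then the special case $v = \bar y$ of this vanishing.

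To analyse $I_v$, I change variables $u = \tilde v + pu'$, so $|du| = p^{-n}|du'|$ and $u'$ ranges over $\ZZ_p^n$, and apply the integer-coefficient Taylor expansion
$$
F(\tilde v + pu') = F(\tilde v) + p\, \grad F(\tilde v) \cdot u' + p^2 R(\tilde v, u'), \qquad R \in \ZZ[\tilde v, u'],
$$
which holds because $F(x+y) - F(x) - \grad F(x) \cdot y$ lies in the ideal generated by the $y_i y_j$ inside $\ZZ[x,y]$. After factoring out the constant $\exp(2\pi i F(\tilde v)/p^m)$ and the scalar $p^{-n}$, the integral $I_v$ becomes proportional to
$$
J_v := \int_{\ZZ_p^n} \exp\left(2\pi i \frac{\grad F(\tilde v) \cdot u' + p R(\tilde v, u')}{p^{m-1}}\right) |du'|.
$$
The hypothesis $v \notin V(\FF_p)$ ensures that at least one coordinate of $\grad F(\tilde v)$ is a $p$-adic unit.

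To see $J_v = 0$, I discretize: the integrand factors through $u' \bmod p^{m-1}$, so I write $u' = u_0 + p^{m-2} u_1$ with $u_0 \in (\ZZ/p^{m-2}\ZZ)^n$ and $u_1 \in \FF_p^n$ (the borderline case $m=2$ merely forces $u_0 = 0$). Since $R$ is a polynomial, $R(\tilde v, u_0 + p^{m-2}u_1) \equiv R(\tilde v, u_0) \bmod p^{m-2}$, and multiplication by $p$ then makes the $u_1$-dependence of the $R$-term disappear modulo $p^{m-1}$. The only surviving $u_1$-dependence comes from $\grad F(\tilde v) \cdot p^{m-2} u_1 / p^{m-1} = \grad F(\tilde v) \cdot u_1 / p$, and summing over $u_1 \in \FF_p^n$ yields the factor
$$
\sum_{u_1 \in \FF_p^n} \exp\left(2\pi i \frac{\grad F(\tilde v) \cdot u_1}{p}\right) = 0,
$$
by orthogonality of additive characters, since $\grad F(\tilde v) \not\equiv 0 \bmod p$. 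Thus $J_v = 0$, hence $I_v = 0$, proving both statements. The only delicate bookkeeping is tracking the powers of $p$ in the Taylor remainder and handling the edge case $m=2$; there is no genuine obstacle, and in fact the ``large enough $p$'' hypothesis is not forced by this stationary phase argument but is retained for uniformity with later uses in the paper.
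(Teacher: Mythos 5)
Your proof is correct and follows exactly the same approach as the paper's (very terse) proof, which simply cites ``Taylor series around $y$'' and the orthogonality relation $\sum_{t\in\FF_p}\psi_p(t)=0$; you have merely filled in the discretization and bookkeeping details. Your side remark that the ``large enough $p$'' hypothesis is not actually needed for this lemma is also correct.
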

\begin{proof}
This follows by taking Taylor series around $y$ and by the basic relation
$$
\sum_{t\in\FF_p}\psi_p(t)=0
$$
for any nontrivial additive character $\psi_p$ on $\FF_p$.
\end{proof}

We begin with the proof of the almost trivial part of Theorem \ref{rplus1}.
\begin{proof}[Proof of Theorem \ref{rplus1} for $m\leq r$, resp.~$m\leq r_{y,p}$]
Note that for small $p$, there is nothing to prove by (\ref{fix1}), resp.~(\ref{fixp}). If $r=+\infty$, the theorem follows easily. We may thus suppose that $r<+\infty$ and that $p$ is large.
Let $V$ be the subscheme of $\AA_\ZZ^n$ given by the equations $\grad F=0$, and write $d$ for the dimension of $V\otimes \CC$.
Fix $m>1$ with $m\leq r$, resp.~$m\leq r_{y,p}$.
For all $y\in\ZZ^n$ one has
$$
ma(F) \leq r a(F) \leq n - d,
$$
by (\ref{c_0b}), resp.
$$
 ma_{y,p}(F)\leq r_{y,p} a_{y,p}(F).
$$
Also, when $p$ is large enough, one has
$$
S(F,p,m)  = p^{-n} \# V(\FF_p), 
$$
resp.,
\begin{equation}\label{rmn}
S_y(F,p,m) = p^{-n}\mbox{ and }r_{y,p} a_{y,p}(F) \leq n
\end{equation}
for $y\bmod p$ in $V(\FF_p)$,
 and $$S_y(F,p,m)  = 0$$
for $y\bmod p$ outside $V(\FF_p)$.
  Indeed, this follows by Lemmas \ref{AK2} and \ref{smooth}.
By Noether normalization there exists $D$ such that
$$
\# V(\FF_p) \leq D p^d,
$$
uniformly in $p$.
One readily finds
$$
|S(F,p,m)| \leq D p^{-ma(F)},
$$
resp.
$$
|S_y(F,p,m)| \leq p^{-ma_{y,p}(F)},
$$
for all large $p$ and all $y\in\ZZ^n$, which finishes the proof.
\end{proof}

\begin{proof}[Proof of Theorem \ref{rplus1} for $m=r+1$, resp.~$m= r_{y,p}+1$]
Note that for small $p$, there is nothing to prove by (\ref{fix1}), resp.~(\ref{fixp}). We may thus again suppose that $p$ is large and that $r<+\infty$.
Fix $y\in\ZZ^n$. By Lemma \ref{smooth} we may suppose that there
exists a critical point $y'\in y+\ZZ_p^n$ of $F$, such that
$F-F(y')$ vanishes with order $r_{y,p}$ at $y'$ and $c_{y'}(F-F(y')) =
a_{y,p}(F)$. Write $f_y(x)$ for $F(x+y')-F(y')$ and $f_y =
\sum_{i\geq r_{y,p}} f_{y,i}$ with $f_{y,i}$ either identically zero
or homogeneous and of degree $i$ and with $f_{y,r_{y,p}}$ nonzero
for a choice of such $y'$. We first prove (\ref{by}) by the
following calculation, where $\psi$ is the additive character on
$\QQ_p$ sending $x$ to $\exp(2\pi i x')$ for any rational number
$x'$ which lies in $\ZZ[1/p]$ and satisfying $x-x'\in \ZZ_p$, and
with Haar measure normalized as in section \ref{not}:
\begin{eqnarray*}
S_y (F,p,r_{y,p}+1) & = & \int_{x\in y+ (p\ZZ_p)^n} \psi\big(\frac{F(x)}{p^{r_{y,p}+1}}\big)|dx|\nonumber \\
& = & \int_{x\in (p\ZZ_p)^n} \psi\big(\frac{f_{y}(x)  + F(y') }{p^{r_{y,p}+1}}\big)|dx|\nonumber \\
& = & \frac{b_y}{p^n}\int_{u\in \ZZ_p^n} \psi\big(\frac{p^{r_{y,p}}f_{y,r_{y,p}}(u) + p^{r_{y,p}+1}f_{y,r_{y,p}+1}(u) +\ldots }{p^{r_{y,p}+1}}\big)|du|\\
& = & \frac{b_y}{p^n}\int_{u\in \ZZ_p^n} \psi\big(\frac{p^{r_{y,p}}f_{y,r_{y,p}}(u)}{p^{r_{y,p}+1}}\big)|du|\\
& = & \frac{b_y}{p^n}\int_{u\in \ZZ_p^n} \psi\big(\frac{f_{y,r_{y,p}}(u)}{p}\big)|du|\\
& = & \frac{b_y}{p^n}\sum_{v\in \FF_p^n} \int_{u\in\ZZ_p^n,\ \overline u=v} \psi\big(\frac{f_{y,r_{y,p}}(u)}{p}\big)|du|\\
& = & \frac{b_y}{p^{2n}} \sum_{v\in \FF_p^n} \psi_p\big(
\overline{f_{y,r_{y,p}}}(v) \big).
\end{eqnarray*}
Here we denote by $\overline u$ the tuple in $\FF_p^n$ obtained by
reduction mod $p$ of the components $u_i\in\ZZ_p$ of $u$, by
$\psi_p$ the nontrivial additive character on $\FF_p$ sending $w$ to
$\psi(w'/p)$ for any $w'\in\ZZ_p$ which projects to $w$, by
$\overline{f_{y,r_{y,p}}}$ the reduction modulo $p$ of
$f_{y,r_{y,p}}$, and we put
$$b_y:=\psi\Big(\frac
{F(y')}{p^{r_{y,p}+1}}\Big).$$
 Now by Lemma \ref{lemKa}, applied to
$h=f_{y,r_{y,p}}$ and with $k=0$, there exists a constant $D>0$ such
that
$$
|\sum_{v\in \FF_p^n}  \psi_p\big(\overline{f_{y,r_{y,p}}}(v)\big)| \leq
D\cdot p^{n-\delta_{y,p}}
$$
for each large $p$ and uniformly in $y$ for $\delta_{y,p}$ so that  $\delta_{y,p}=1/2$ in the case that $\overline{f_{y,r_{y,p}}}$ is non-reduced, and $\delta_{y,p}=1$  in the case that $\overline{f_{y,r_{y,p}}}$ is reduced.

We claim, for large $p$ and for all $y\in\ZZ^n$, 
that
\begin{equation}\label{red}
(r_{y,p}+1)c_0(f_y) \leq n+\delta_{y,p}.
\end{equation}
If $y'$ is a non-isolated critical point of $F$ (in the set of
critical points of $F$ with coordinates in an algebraic closure of
$\QQ_p$), then $r_{y,p}c_0(f_y) \leq n-1$ by (\ref{c_0b}) and the
claim follows from $c_0(f_y)\leq 1$. Also, if $\delta_{y,p}=1$, then
the claim follows from (\ref{trivial}) and $c_0(f_y)\leq 1$. In the
case that $y'$ is an isolated critical point (in the set of critical
points of $F$ with coordinates in an algebraic closure of $\QQ_p$)
and $\delta_{y,p}=1/2$ simultaneously, it follows from our
assumption that $p$ is large that $f_{y,r_{y,p}}$ is non-reduced and
thus (\ref{red}) follows from Lemma \ref{geomc}. This
assumption of $p$ being large is uniform in $y$ since there are only
finitely many isolated critical points of $F$. Hence, we find for
all large $p$ and all $y$ that
\begin{eqnarray}
| S_y(F,p,r_{y,p}+1) | & = & \frac{1}{p^{2n}} |\, \sum_{v\in \FF_p^n}  \psi_p\big( \overline{f_{y,r_{y,p}}}(v) \big) \ | \nonumber \\
 & \leq &  D\cdot  p^{-n - \delta_{y,p}} \label{y}\\
 & \leq &  D\cdot p^{ -(r_{y,p}+1) c_0(f_y) } \leq  D\cdot p^{ -(r_{y,p}+1) a_{y,p}(F) }.
 \end{eqnarray}
This completes the proof of (\ref{by}) for all $y$ and $m=r_{y,p}+1$.

\smallskip
To show (\ref{b}), let $V$ be the subscheme of $\AA_\ZZ^n$ given by
the equations $\grad F=0$, and let $d$ be the dimension of $V\otimes
\CC$. For each $v\in V(\FF_p)$, fix a point $y(v)$ in $\ZZ^n$ lying
above $v$, and a critical point $y'(v)$ of $F$ lying above $v$ such
that $F-F(y'(v))$ vanishes with order $r_{y(v),p}$ and $c_{y'}(F-F(y')) =
a_{y,p}(F)$ (such $y'$ exists since $p$ is assumed large). Now
(\ref{b}) for $m=r+1$ follows by estimating, for large primes $p$,
\begin{eqnarray}
| S(F,p,r+1) | & = & | \sum_{v\in V(\FF_p)} S_{y(v)} (F,p,r+1) | \label{a1}\\
&   \leq  &  \sum_{v\in V(\FF_p)} D\cdot  p^{-n - \varepsilon_v},  \label{a2}
     \end{eqnarray}
for some $D>0$, and where $\varepsilon_v $ equals
$\delta_{y(v),p}(y'(v))$ whenever $r= r_{y,p}$ and where
$\varepsilon_v = 0$ when $r< r_{y,p}$. Here the equality (\ref{a1})
follows from Lemma \ref{AK2}, and the inequality (\ref{a2}) comes
from (\ref{y}) when $r= r_{y,p}$ and from (\ref{rmn}) when $r<
r_{y,p}$.
By quantifier elimination for the language of rings with coefficients in $\ZZ$, there exist $V_0$, $V_{1/2}$, and $V_1$, such that $V_i$ is a finite disjoint union of subschemes
of $V$ (it is constructible and defined over $\ZZ$) with $\cup_i V_i(\CC) = V(\CC)$ and such that the following hold, for $i=0$, $\frac{1}{2}$, and $1$. The polynomial $F-F(b)$ vanishes with order $>r$ at $b$ for $b\in
V_0(\CC)$, $F-F(b)$ vanishes with order $r$ at $b$ for $b\in
V_{1/2}(\CC)$ and also for $b\in V_{1}(\CC)$, and $(F(x+b)-F(b))_r$
is reduced for $b\in V_{1}(\CC)$, and non-reduced for $b\in
V_{1/2}(\CC)$. Let $d_i$ be the dimension of
$V_i\otimes \CC$. Note that for large $p$, one has $\varepsilon_v = i $ for $v\in V_i(\FF_p)$.  
 Now we bound as follows:
\begin{eqnarray}
| S(F,p,r+1) | & \leq & \sum_i \#V_i(\FF_p) D\cdot  p^{-n - i }\label{a300} \\
& \leq &   \sum_i \#V_i(\FF_p) \cdot D\cdot p^{ -(r+1) a(F) - d_i   }  \label{a30} \\ 
& \leq & D' p^{-ma(F)} \label{a4},
     \end{eqnarray}
for some $D'$. The inequality (\ref{a300}) follows from (\ref{a2}),
(\ref{a30}) follows from Proposition \ref{geom2} and the definition of
$a(F)$ as a minimum, and (\ref{a4}) from Noether normalization.
\end{proof}

\begin{proof}[Proof of Theorem \ref{rplus1} for $m=r+2$, resp.~$m= r_{y,p}+2$.]
For the same reasons as in the previous proofs we may concentrate on
large primes $p$ and suppose $r<+\infty$. Fix $y\in\ZZ^n$.
By Lemma \ref{smooth} we may suppose that there
exists a critical point $y'\in y+\ZZ_p^n$ of $F$, such that
$F-F(y')$ vanishes with order $r_{y,p}$ at $y'$ and $c_{y'}(F-F(y')) =
a_{y,p}(F)$. Write $f_y(x)$
for $F(x+y')-F(y')$ and $f_y = \sum_{i\geq r_{y,p}} f_{y,i}$ with
$f_{y,i}$ either identically zero or homogeneous and of degree $i$,
and where $f_{y,r_{y,p}}$ is nonzero. We first prove (\ref{by}). Let
$X$ be the subscheme of $\AA_{\ZZ_p}^n$ associated to the equations $\grad
f_{y,r_{y,p}} =0$. Let $A_p$ be the subset of $\ZZ_p^n$ of those
points whose projection mod $p$ lies in $X(\FF_p)$. Also, let $C_p$
be the complement of $A_p$ in $\ZZ_p^n$.
We calculate as follows:  
\begin{eqnarray*}
S_y(F,p,r_{y,p}+2)
& = & \int_{x\in y+ (p\ZZ_p)^n} \psi\big(\frac{F(x)}{p^{r_{y,p}+2}}\big)|dx|\nonumber \\
& = & \frac{b_y}{p^n}\int_{u\in \ZZ_p^n} \psi\big(\frac{p^{r_{y,p}}f_{y,r_{y,p}}(u) + p^{r_{y,p}+1}f_{y,r_{y,p}+1}(u) 
 }{p^{r_{y,p}+2}}\big)|du|\\
& = & \frac{b_y}{p^n}\int_{u\in \ZZ_p^n} \psi\big(\frac{f_{y,r_{y,p}}(u) + p f_{y,r_{y,p}+1}(u)  }{p^{2}}\big)|du|\\
& = & \frac{b_y}{p^n} \big( I_1 + I_2 \big),
\end{eqnarray*}
where $b_y = \psi\Big(\frac{F(y')}{p^{r_{y,p}+2}}\Big)$,
$$
I_1 = I_{1}(y) = \int_{u\in A_p} \psi\big(\frac{f_{y,r_{y,p}}(u) + pf_{y,r_{y,p}+1}(u)  }{p^{2}}\big)|du|,
$$
and
$$
I_2 = I_{2}(y) = \int_{u\in C_p} \psi\big(\frac{f_{y,r_{y,p}}(u) + pf_{y,r_{y,p}+1}(u)  }{p^{2}}\big)|du|.
$$
One has $I_2 = 0$ by Hensel's Lemma and by the basic relation
$$
\sum_{t\in\FF_p}\psi_p(t)=0
$$
for the nontrivial additive character $\psi_p$ on $\FF_p$.

\par
To estimate $|I_1|$, we first assume the condition on $y$ and $y'$
that $f_{y,r_{y,p}+1}$ vanishes on at least one absolutely
irreducible component of $X$ of maximal dimension. We will show that
this condition on $y$ and $y'$ implies
\begin{equation}\label{van}
(r_{y,p}+2)c_0(f_y) \leq 2n- \dim (X\otimes \QQ_p).
\end{equation}
If $\dim (X\otimes \QQ_p) \leq n-2$, then (\ref{van}) follows from
$(r_{y,p}+2)c_0(f_y) \leq n+2$, which in turn follows from
$c_0(f_y)\leq 1$ and (\ref{trivial}). If $\dim X\otimes \QQ_p =n-1$
one has that $(r_{y,p}+2)c_0(f_y) \leq n +1$ by Lemma \ref{geomc},
and (\ref{van}) follows also in this case and thus in general.
By Noether normalization, there exists $E>0$ independent of $y$ such that
$$
\# X(\FF_p) \leq E p^{\dim (X\otimes \QQ_p)}
$$
for all large $p$.
Since
$$
|I_1| \leq  \frac{\# X(\FF_p)}{p^{n}},
$$
we find from the above discussion that, for all $y$ satisfying the above condition,
$$
\frac{1}{p^n}|I_1| \leq E p^{\dim (X\otimes \QQ_p) -2n} \leq E
p^{-(r_{y,p}+2)c_0(f_y)} \leq E p^{-(r_{y,p}+2) a_{y,p}(F)}
$$
for all large $p$.

\par
Finally assume the condition on $y$ and $y'$ that $f_{y,r_{y,p}+1}$
does not vanish on any absolutely irreducible component of $X$ of
maximal dimension. By Lemma \ref{AK2}, one can rewrite $I_1$ for
large $p$ as
$$
I_1 = \int_{u\in A_p} \psi\big(\frac{ f_{y,r_{y,p}+1}(u)  }{p}\big)|du|.
$$
Using this expression we compute
\begin{eqnarray*}
\frac{1}{p^n} I_1
& = & \frac{1}{p^n}\sum_{v\in X(\FF_p)} \int_{\overline u=v,\ u\in \ZZ_p^n} \psi\big(\frac{f_{y,r_{y,p}+1}(u)}{p}\big)|du|\\
& = & \frac{1}{p^{2n}} \sum_{v\in X(\FF_p)}  \psi_p\big( \overline{f_{y,r_{y,p}+1}}(v) \big),
\end{eqnarray*}
where the notations $\overline u$, $\psi_p$, and $\overline{f_{y,r_{y,p}+1}}$ are as in the proof of the case $m=r_{y,p}+1$, namely reductions modulo $p$.
By Lemma \ref{lemKa}, there exists $N>0$ such that, for all $y$ satisfying the above condition, and for all large $p$,
$$
|\sum_{y\in X(\FF_p)}  \psi_p\big( \overline{f_{y,r_{y,p}+1}}(y) \big)| \leq N p^{\dim (X\otimes \QQ_p) - 1/2}.
$$
Hence,
$$
|\frac{1}{p^n} I_1| \leq N p^{-2n + \dim (X\otimes \QQ_p) - 1/2}
$$
for large $p$. If $f_{y,r_{y,p}}$ is non-reduced, then $\dim
X\otimes \QQ_p = n-1$. If $f_{y,r_{y,p}}$ is reduced, then $\dim
(X\otimes \QQ_p)\leq n-2$. By (\ref{c_0-1}) of Lemma \ref{geomc},
$c_0(f_{y})\leq 1$ and (\ref{trivial}), one finds in any case that
$$
(r_{y,p}+2)c_0(f_y) \leq 2n - \dim (X\otimes \QQ_p) + 1/2.
$$
Hence,
$$
\frac{1}{p^n}|I_1| \leq N p^{- (r_{y,p}+2)c_0(f_y)} \leq N p^{-
(r_{y,p}+2)a_{y,p}(F)} = N p^{- ma_{y,p}(F)}
$$
for each large $p$, which finishes the proof of (\ref{by}) for $m=r_{y,p}+2$. One derives (\ref{b}) for $m=r+2$  by adapting the argument showing (\ref{by}) as in the proof for $m=r+1$.
\end{proof}

\subsection{Finite field extensions}\label{sec:K}
As usual it is possible to prove analogous uniform bounds for all finite field extensions of $\QQ_p$ and all fields $\FF_q((t))$, when one restricts to large residue field characteristics.
We just give the definitions and formulate the analogue of Conjecture \ref{new} and the analogue of Theorem \ref{rplus1}.

Let  $\cO$ be a ring of integers of a number field, and let $N>0$ be an integer. Let $F$ be a polynomial with coefficients in $\cO[1/N]$ in the variables $x=(x_1,\ldots,x_n)$.
Let $\CO[{[1/N]}]$ be the collection of all non-archimedean local fields $K$ (of any characteristic) with a ring homomorphism $\cO[1/N]\to K$ (where local means locally compact).
For $K$ in $\CO[{[1/N]}]$, write $\cO_K$ for its valuation ring with maximal ideal $\cM_K$
and residue field $k_K$ with $q_K$ elements.
Further write $\psi_K:K\to\CC^\times$ for an additive character
which is trivial on the valuation ring
$\cO_K$ and nontrivial on $\pi_K^{-1} \cO_K$ where $\pi_K$ is a uniformizer of $\cO_K$.
The analogue of the above integrals $S(F,p,m)$ and $S_y(F,p,m)$ for $K$ in $\CO[{[1/N]}]$ are the following integrals for $\lambda$ in $K^\times$,
$$
S(F,K,\lambda):=\int_{x\in \cO_K^n} \psi_K\big( \frac{F(x)}{\lambda}\big)|dx|
$$
and, for $y\in\cO_K^n$,
$$
S_y(F,K,\lambda):=\int_{x\in y + (\cM_K)^n} \psi_K\big(\frac{F(x)}{\lambda}\big)|dx|,
$$
where $|dx|$ is the Haar measure on $K^n$, normalized such that $\cO_K^n$ has measure one, and where $y + (\cM_K)^n = \prod_{i=1}^{n} (y_i+\cM_K)$.

The following naturally generalizes Conjecture \ref{new}, again formulated with the log-canonical threshold in the exponent, where other exponents, like the motivic oscillation index of \cite{Cigumodp} or the complex oscillation index of \cite[Section 13.1.5]{Arnold.G.V.II} or \cite{Malgrange}, that can be larger than $1$, again would make sense as well.

\begin{conj}\label{ConDSK}
There exist $M>0$ and a function $L_F:\NN\to \NN$ with $L_F(m) \ll  m^{n-1} $ such that for all $K\in \CO[{[1/N]} ]$ whose residue field has characteristic at least $M$, all $y\in\cO_K^n$, and all $\lambda\in K^\times$ with $ \ord (\lambda) \geq 2$, if one writes $m=\ord(\lambda)$, one has
$$
| S(F,K,\lambda) |_\CC \leq L_F(m) q_K^{-m a(F)},
$$
and
$$
| S_y(F,K,\lambda) |_\CC \leq L_F(m) q_K^{-ma_{y,K}(F)}.
$$
Here $\ord$ denotes the valuation on $K^\times$ with
$\ord(\pi_K)=1$, and $a_{y,K}(F)$ equals the minimum of the
log-canonical thresholds of $F(x)-F(y')$ at $y'$, where the minimum
is taken over all $y'\in y + (\cM_K)^n$.
\end{conj}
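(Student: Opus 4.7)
The plan is to reduce the conjecture over general $K\in\CO[{[1/N]}]$ to the case of finite extensions of $\QQ_p$ with $p$ large via a transfer argument, and then to extend the strategy of Theorem \ref{rplus1} from $m\leq r+2$ to all $m\geq 2$ by iterating embedded resolution of singularities combined with log-canonical threshold inequalities of the type in Proposition \ref{geomcfr} and Lemma \ref{geomc}.

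First I would arrange that $F\in\cO_K[x]$. Since the residue characteristic of $K$ is bounded below by some $M$ (to be chosen), the given ring map $\cO[1/N]\to K$ factors through $\cO_K$ after clearing denominators, and multiplying $F$ by a suitable unit leaves the desired inequality unchanged. Writing $\lambda=u\pi_K^m$ with $u\in\cO_K^\times$, the unit $u$ can be absorbed into $F$ without affecting log-canonical thresholds, reducing to $\lambda=\pi_K^m$. The integrals $S(F,K,\pi_K^m)$ and $S_y(F,K,\pi_K^m)$ are then motivic exponential integrals in the sense of Cluckers--Loeser, definable in the Denef--Pas language with coefficients in $\cO[1/N]$. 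A uniform cell decomposition in this framework, together with Ax--Kochen--Ershov transfer in sufficiently large residue characteristic, reduces the problem to proving the desired bounds for finite extensions $K$ of $\QQ_p$, uniformly in $p$ (large) and in the ramification.

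For such $p$-adic $K$ I would proceed via embedded resolution of singularities. For each critical value $b$ of $F$, fix a log-resolution $\pi_b:Y_b\to\AA^n$ of $F-b=0$ over $\QQ$; by spreading out, for $p>M$ (enlarging $M$ if necessary) this resolution extends over $\cO_K$. Pulling back and decomposing over the charts reduces $S_y(F,K,\pi_K^m)$ to a finite sum of oscillatory integrals in resolved coordinates where $\pi_b^*(F-b)$ is a monomial times a unit. Igusa's stationary phase method then evaluates each chart as a product of one-variable Gauss-type sums controlled by the numerical data $(N_E,\nu_E-1)$ of the exceptional divisors $E$ of $\pi_b$: the dominant exponent should match $\min_E\nu_E/N_E=a_{y,K}(F)$, with subdominant contributions absorbed into a polynomial factor $L_F(m)\ll m^{n-1}$ reflecting multiplicities of poles of the associated local Igusa zeta function.

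The hard part will be proving that the subdominant contributions actually assemble into a bound with the log-canonical threshold in the exponent rather than a weaker exponent determined by the monodromy spectrum; this is the fundamental difficulty behind Igusa's original conjecture. The results of the present paper achieve exactly this for exceptional divisors produced by the first two rounds of blow-up above each critical point, which is why Theorem \ref{rplus1} stops at $m\leq r+2$. A natural but delicate strategy for the full conjecture is to iterate the inequalities of Proposition \ref{geomcfr}, Lemma \ref{geomc}, and Proposition \ref{geom2} along the resolution tower: at each stage, extract the leading homogeneous part of the strict transform, bound its log-canonical threshold by a Lemma \ref{geomc}-type estimate, and use the resulting inequality to absorb the next exceptional contribution into the main term. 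Making this induction close with the uniform polynomial loss $L_F(m)\ll m^{n-1}$ across all residue fields simultaneously is the central obstacle and the reason why at present only the partial bound captured by Theorem \ref{rplus1} is unconditional.
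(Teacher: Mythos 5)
There is a genuine gap, and it is unavoidable: the statement you set out to prove is Conjecture \ref{ConDSK}, which is an \emph{open conjecture} in the paper. The authors do not prove it; they only prove the partial analogue Theorem \ref{rplus1K} (the range $2\leq m\leq r+2$, resp.\ $2\leq m\leq r_{y,K}+2$), by the same argument as Theorem \ref{rplus1}. Your text is not a proof either, and you in effect concede this: after the reduction steps you write that the ``hard part'' of assembling the subdominant contributions into a bound with exponent $a(F)$, uniformly in the residue field, is ``the central obstacle.'' That obstacle is precisely the content of the conjecture. Resolution of singularities plus Igusa's stationary phase, as you describe it, is classical and yields only the bounds with constants depending on the local field (the analogues of (\ref{fix1}) and (\ref{fixp}), due to Igusa and Denef--Veys); the whole difficulty is the uniformity in $p$ (and, here, additionally in $K$ and its ramification) of the constant $L_F(m)\ll m^{n-1}$ together with the exponent $m\,a(F)$ for \emph{all} $m\geq 2$. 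Your proposed remedy --- iterating Proposition \ref{geomcfr}, Lemma \ref{geomc} and Proposition \ref{geom2} ``along the resolution tower'' --- is stated as a hope, not an argument: no induction hypothesis is formulated, no reason is given that the leading-homogeneous-part inequalities at later stages of the tower still compare with $a_{y,K}(F)$ rather than with the weaker chart-by-chart data $\nu_E/N_E$, and nothing controls the accumulation of losses over the unbounded number of blow-ups needed as $m\to\infty$. This is exactly why the paper stops at $m\leq r+2$.

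Two further steps in your reduction are also shakier than you present them. First, Ax--Kochen--Ershov transfers the truth of first-order statements in large residue characteristic; transferring \emph{quantitative bounds} on exponential integrals uniformly in $K$ requires the transfer principle for bounds of motivic exponential functions (Cluckers--Loeser type), which you invoke only by name and which in any case merely moves the problem between finite extensions of $\QQ_p$ and fields $\FF_q\llp t\rrp$ of large residue characteristic --- it does not produce the bound for either class. Second, absorbing the unit $u$ in $\lambda=u\pi_K^m$ into $F$ replaces $F$ by a $K$-dependent polynomial, so you must argue (e.g.\ by definability of the whole family, or by the fact that the paper's character-sum estimates are uniform over nontrivial additive characters, as in Lemma \ref{lemKa}) that the constants do not degrade; this is fixable but not automatic. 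The honest summary is that your proposal reproduces the known reductions and the known partial results, and leaves open exactly what the conjecture asserts.
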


With the same proof as for Theorem \ref{rplus1},
we find the following.
\begin{thm}\label{rplus1K}
Let $F$ be a polynomial over $\cO[1/N]$.
There exist $M>0$ and a constant $L_F$ such that for all $K\in \CO[{[1/N]}]$ whose residue field has characteristic at least $M$ and for all $\lambda\in K^\times$, if one writes $m=\ord(\lambda)$ and if $2\leq m \leq r+2$, resp.~$2\leq m \leq r_{y,K}+2$, then one has
$$
|S(F,K,\lambda) |_\CC \leq L_F  q_K^{-m a(F)},
$$
resp.
$$
| S_y(F,K,\lambda) |_\CC \leq L_F q_K^{-ma_{y,K}(F)}.
$$
Here, $r_{y,K}(F)$ is the minimum of the order of vanishing of
$x\mapsto F(x)-F(y')$ where $x$ runs over those singular points of
the polynomial mapping $x\mapsto F(x)-F(y') :y+ \prod_{i=1}^{n}
(y_i+\cM_K) \to K $ with $c_{y'}(F-F(y')) = a_{y,K}(F)$.
\end{thm}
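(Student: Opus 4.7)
The plan is simply to mirror the proof of Theorem \ref{rplus1} essentially verbatim, with the systematic replacements: $\QQ_p$ by $K$, $\ZZ_p$ by $\cO_K$, $\FF_p$ by $k_K$, and ``for $p$ large'' by ``for residue characteristic at least $M$'' for some $M = M(F)$ to be extracted along the way. All ingredients of the original proof transfer directly. Lemma \ref{lemKa} is already stated uniformly in prime powers $q$, so it applies with $q = q_K$. Lemmas \ref{AK2} and \ref{smooth} have natural analogues for arbitrary $K \in \CO[{[1/N]}]$ of large residue characteristic, following by the same Ax--Kochen argument sketched in the proof of Lemma \ref{AK2}: the relevant statements are first-order in the language of valued fields, hence reduce via ultraproducts to the equicharacteristic-zero case of a complete discrete valuation ring.

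One then runs the three cases $m \leq r$, $m = r+1$, $m = r+2$ (and their $y,K$-variants) exactly as before. In the first case, the analogue of Lemma \ref{smooth} reduces $S(F,K,\lambda)$ to $q_K^{-n} \cdot \#V(k_K)$, which is bounded via Noether normalization by $D q_K^{d-n}$; the inequality $r \cdot a(F) \leq n - d$ from (\ref{c_0b}) of Proposition \ref{geom2} concludes. In the second case, Taylor expansion around a critical point $y'$ and the substitution $x = y' + \pi_K u$ reduces $S_y$ to the character sum $\sum_{v \in k_K^n} \psi_{k_K}\bigl(\overline{f_{y,r_{y,K}}}(v)\bigr)$ (the higher-order terms vanish by $\sum_{t \in k_K} \psi_{k_K}(t) = 0$); Lemma \ref{lemKa} bounds this by $D q_K^{n-\delta_{y,K}}$ with $\delta_{y,K} \in \{1/2,1\}$, and the required inequality $(r_{y,K}+1) c_0(f_y) \leq n + \delta_{y,K}$ follows from Lemma \ref{geomc} or from the trivial bound (\ref{trivial}) together with $c_0 \leq 1$. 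In the third case, the Taylor expansion involves also $\pi_K f_{y,r_{y,K}+1}$; partition $\cO_K^n$ into $A_K$ (points reducing into the singular locus $X$ of $f_{y,r_{y,K}}$) and its complement, where Hensel's Lemma kills the integral. On $A_K$ one applies either a trivial volume bound together with (\ref{c_0-2}) when $f_{y,r_{y,K}+1}$ vanishes on a top-dimensional absolutely irreducible component of $X$, or the Katz square-root cancellation from Lemma \ref{lemKa} together with (\ref{c_0-1}) otherwise. To pass from (\ref{by}) to the global bound (\ref{b}), one stratifies the critical locus $V$ by quantifier elimination into pieces $V_0,V_{1/2},V_1$ on which $\delta$ is locally constant, invokes Proposition \ref{geom2} on each stratum to control $(r+1)a(F) + d_i$, and sums.

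The main obstacle is the uniformity in $K$. Unlike in Theorem \ref{rplus1}, where the implied constants could absorb finitely many primes via the already-known bounds (\ref{fix1})--(\ref{fixp}), here the exclusion of small residue characteristic is intrinsic to the statement and one must show genuinely uniform bounds across the entire family $\{K \in \CO[{[1/N]}] : \mathrm{char}(k_K) \geq M\}$, which mixes local fields of mixed and equal characteristic with residue cardinalities $q_K$ ranging over all prime powers. The Ax--Kochen principle supplies this uniformity: each first-order geometric ingredient in the proof (multiplicity at lifts of critical points, vanishing of Hensel-type integrals, Noether-normalization counts, and the quantifier-elimination stratification of $V$) holds for all local fields in $\CO[{[1/N]}]$ of sufficiently large residue characteristic, and the final threshold $M$ is the maximum of the finitely many characteristic thresholds produced by these individual inputs.
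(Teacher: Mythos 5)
Your proposal is correct and is exactly the paper's approach: the paper itself dispatches Theorem~\ref{rplus1K} with the single sentence ``With the same proof as for Theorem~\ref{rplus1}, we find the following,'' and you have spelled out precisely how that transfer works (Lemma~\ref{lemKa} is already uniform in prime powers $q$, Lemmas~\ref{AK2} and~\ref{smooth} extend to all $K\in\CO[{[1/N]}]$ of large residue characteristic by the same Ax--Kochen argument, and the three cases $m\le r$, $m=r+1$, $m=r+2$ go through verbatim with $p$ replaced by $\pi_K$ and $\FF_p$ by $k_K$). Your observation that the small-characteristic escape via (\ref{fix1})--(\ref{fixp}) is unavailable here is correct but also moot, since the statement of Theorem~\ref{rplus1K} only asserts the bound for residue characteristic at least $M$, so nothing needs to be proved in the excluded range.
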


\subsection{A recursive bound for $c_0(f)$}\label{recursive}

We conclude the paper with a generalization of the bound of
Proposition \ref{geomcfr}, which also sharpens (\ref{c_0-2}). Let
$f$ be a non-constant polynomial over $\CC$ in the variables
$x=(x_1,\ldots,x_n)$ with $f(0)=0$, and write
\begin{equation}\label{ffi}
f = \sum_{i\geq 1} f_i,
\end{equation}
with $f_i$ either identically zero or homogeneous of degree $i$.

For $e$ a positive integer, let $d_e $ be the least common multiple
of the integers $1,2,\ldots,e $, and let $I_e(f) $ be the ideal
generated by the polynomials
$$
f_i^{d_e/(e -i+1)}
$$
for $i$ with $1\leq i\leq e $. Write $c(I_e(f))$ for the
log-canonical threshold of the ideal $I_e(f)$. (The log canonical
threshold $c(I)$ of a non-zero ideal $I$ in $n$ variables over $\CC$
can be defined analogously as in Definition \ref{c(f)}, for instance
as $\min_E \{\frac{\nu}N\}$, where $\pi$ is now any fixed
log-principalization of $I$ and $N$ is now the multiplicity along
$E$ of the divisor of $I\mathcal{O}_Y$. See e.g. \cite{MustJAMS} for
more details.) We put $c(I)=0$ when $I$ is the zero ideal.

\begin{thm}\label{geomcfe}
One has for any $e >0$ that
\begin{equation}\label{c_0e}
 (e +1)c_0(f) \leq n + d_e \cdot c(I_e(f) ).
\end{equation}
\end{thm}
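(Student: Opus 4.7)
The plan is to mimic the arc-space proof of Proposition~\ref{geomcfr}, with the contact locus of the single form $f_r$ replaced by that of the ideal $I_e(f)$. The weights $d_e/(e-i+1)$ on its generators are engineered precisely so that, after a rescaling $x=t^j y$, the ideal contact condition on $y$ becomes the uniform order-of-vanishing condition $\ord_t f_i(x)\geq j(e+1)$ for all $1\leq i\leq e$.

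First I would dispose of the trivial case $I_e(f)=0$: then $f_1=\cdots=f_e=0$, so $f$ vanishes at $0$ to order at least $e+1$ and (\ref{trivial}) gives $c_0(f)\leq n/(e+1)$, matching (\ref{c_0e}) with $c(I_e(f))=0$. Assume henceforth $I_e(f)\neq 0$. Invoking the ideal analogue of Musta\c t{\v{a}}'s codimension formula (an easy extension of Corollary 0.2 of \cite{MustJAMS}), I would select a positive integer $j$ for which
$$
\codim \Cont^{\geq jd_e}(I_e(f)) = jd_e\cdot c(I_e(f)).
$$
Such $j$ exist because equality in this formula is attained at every $k$ that is a positive multiple of the valuation $a_E:=\ord_E\bigl(I_e(f)\cO_Y\bigr)$ of $I_e(f)$ along a divisor $E$ on a log-principalization that computes $c(I_e(f))$; taking $k=a_E\cdot d_e$ (or any multiple) produces an admissible $j=k/d_e$.

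Next I would form the cylinder
$$
B := \{x\in (t^j\CC[[t]])^n : \ord_t f_i(x)\geq j(e+1)\text{ for all }1\leq i\leq e\}
$$
and verify two things. First, $B\subset \Cont_0^{\geq j(e+1)}(f)$: for $i\leq e$ the inequality is built into the definition, while for $i\geq e+1$ the homogeneity of $f_i$ combined with $x\in (t^j)^n$ forces $\ord_t f_i(x)\geq ij\geq (e+1)j$. Second, the codimension count: the bijection $y\mapsto t^j y$ of $\CC[[t]]^n$ onto $(t^j\CC[[t]])^n$ translates $\ord_t f_i(x)\geq j(e+1)$ into $\ord_t f_i(y)\geq j(e-i+1)$, which is in turn equivalent to $\ord_t f_i^{d_e/(e-i+1)}(y)\geq jd_e$. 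Hence $B$ corresponds under the bijection to $\Cont^{\geq jd_e}(I_e(f))$, and
$$
\codim B = jn + \codim \Cont^{\geq jd_e}(I_e(f)) = jn + jd_e\cdot c(I_e(f))
$$
by the choice of $j$.

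To conclude, I would apply Musta\c t{\v{a}}'s bound (\ref{<=}) for $f$ at level $\ell=j(e+1)$:
$$
c_0(f)\leq \frac{\codim B}{\ell} = \frac{n + d_e\cdot c(I_e(f))}{e+1},
$$
which rearranges to (\ref{c_0e}). The main subtlety I expect is in the selection step: one really needs equality in the ideal codimension formula at a level $k$ divisible by $d_e$, since it is exactly this divisibility that makes the rescaling $x=t^j y$ align the contact level $jd_e$ for $I_e(f)$ with the contact level $j(e+1)$ for $f$. Once the right $j$ is in hand, everything else is bookkeeping with the exponents $d_e/(e-i+1)$, which are dictated by the requirement that $j(e-i+1)\cdot d_e/(e-i+1)=jd_e$ be independent of $i$.
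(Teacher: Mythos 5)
Your proof is correct and follows essentially the same route as the paper's first proof: both apply Musta\c t{\v{a}}'s codimension formula for the ideal $I_e(f)$ to choose a level $k=jd_e$ where equality holds, rescale by $t^j$ to produce the cylinder $B$, check $B\subset\Cont_0^{\geq j(e+1)}(f)$, and close with the bound (\ref{<=}) at level $j(e+1)$. The only small differences are cosmetic: you spell out why a multiple of $d_e$ can be chosen (via the valuation $a_E$ along a computing divisor), and you separate off the trivial case $I_e(f)=0$, neither of which changes the argument.
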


Before proving Theorem \ref{geomcfe}, we state an equivalent
formulation and give some illustrative examples of (\ref{c_0e}).

Write as usual $f = \sum_{i\geq r} f_i$, where $f_r$ is nonzero. For
$k$ a positive integer, let $J_k(f)$ be the ideal generated by the
polynomials
$$
f_{r+i}^{d_k/(k -i)}
$$
for $i$ with $0\leq i\leq k-1$. Then
\begin{equation}\label{c_0k}
(r+k)c_0(f) \leq n + d_k \cdot c(J_k(f)).
\end{equation}
This reformulation (\ref{c_0k}) follows directly from (\ref{c_0e}),
using the multiplicativity of the log-canonical threshold, namely,
that $a \cdot c(I^a)= c(I)$ for any integer $a>0$ and any ideal $I$.
Its advantage is that the involved numbers are smaller.

For $k=1$, we obtain
$$
(r+1) c_0(f) \leq n + c(f_r),
$$
which is Proposition \ref{geomcfr}. The case $k=2$ sharpens and
generalizes (\ref{c_0-2}):
$$
(r+2)c_0(f) \leq n + 2c(f_r,f_{r+1}^2).
$$
As a third example, for $k=3$, we have
$$
(r+3)c_0(f) \leq n + 6c(f_r^2,f_{r+1}^3,f_{r+2}^6).
$$

\smallskip
The proof of Theorem \ref{geomcfe} is similar to the first one of
Proposition \ref{geomcfr}.

\begin{proof}[Proof of Theorem \ref{geomcfe}]
For any ideal $I$ of $\CC[x]$ and any integer $p>0$, we will write $\Cont^{\geq p}(I) $ for
$$
\{x\in \CC[[t]]^n\mid \ord_t h(x)\equiv 0 \bmod (t^p),\mbox{ for all }h\in I\}.
$$
By Corollary 3.4 of \cite{MustJAMS}, there exists $k>0$ such that
\begin{equation}\label{=e}
d_e k c(I_e(f)) = \codim \Cont^{\geq d_e k}(I_e(f)),
\end{equation}
where the codimension is taken as before (namely after projecting by
$\rho_m$ for high enough $m$).
Now define the cylinder $B\subset \CC[[t]]^n$ with $\rho_{k-1}(B)=\rho_{k-1}(\{0\})=\{0\}$
and,
(under corresponding identifications)
$$
B := \rho_{k-1}(\{0\})\times t^k\Cont^{\geq d_e k}(I_e(f)) = \{0\}\times t^k\Cont^{\geq d_e k}(I_e(f))\subset \CC[[t]]^n.
$$
By the homogeneity of the $f_i$, 
one checks for each $i$ that
$$
B\subset \Cont_0^{\geq k(e+1)}(f_i),
$$
and we thus have that
$$
B\subset \Cont_0^{\geq k(e+1)}(f).
$$
Hence, by Corollary 3.6 of \cite{MustJAMS}, one finds
\begin{equation}\label{dimBe}
k(e+1) c_0(f) \leq   \codim B.
\end{equation}
On the other hand, one finds by (\ref{=e}) and the definition of $B$ that
$$
\codim B =  k n + \codim( \Cont^{\geq d_e k}(I_e(f)) ) =  kn +  d_e k c(I_e(f)).
$$
Using this together with (\ref{dimBe}) and dividing by $k$, one finds (\ref{c_0e}).
\end{proof}

\begin{remark}
Also for Theorem \ref{geomcfe}, we could give another proof along
the lines of the alternative proof of Proposition \ref{geomcfr}.
More precisely, one blows up the origin, constructs a
log-principalization of the ideal $I_e(f)$, and performs an adequate
weighted blow-up in order to obtain an exceptional component with
the desired numerical invariants.
\end{remark}

\subsection*
{Acknowledgments} \hspace{0.5cm} The authors would like to thank
J.~Denef, E. Kowalski, and S.~Sperber for their generous sharing of insights. We
would like to express special thanks towards M. Musta{\c{t}}{\v{a}}
and J. Nicaise for discussions around Proposition \ref{geomcfr} and
Theorem \ref{geomcfe}, after which M. Musta{\c{t}}{\v{a}} showed us another proof for Proposition \ref{geomcfr}, different to the two given proofs above.

{\small{The first author would like to thank the Forschungsinstitut f\"ur Mathematik (FIM) at ETH Z\"urich and the MSRI in Berkeley, California, under Grant No. 0932078 000, for their hospitality during the spring semester of 2014.
The authors were supported in part by the European Research
Council under the European Community's Seventh Framework Programme
(FP7/2007-2013) with ERC Grant Agreements nr. 615722
MOTMELSUM, by the Labex CEMPI (ANR-11-LABX-0007-01), and by
the Research Fund KU Leuven (grant OT11/069).}}

\bibliographystyle{amsplain}
\bibliography{anbib}
\end{document}